\DeclareFontFamily{OT1}{pzc}{}
\DeclareFontShape{OT1}{pzc}{m}{it}{<-> s * [1.10] pzcmi7t}{}
\DeclareMathAlphabet{\mathpzc}{OT1}{pzc}{m}{it}
\def    \C      {{\mathbb C}}
\def    \R      {{\mathbb R}}
\def \Z {{\mathbb Z}}
\newcommand{\grad}{\mathrm{grad}}
\renewcommand{\epsilon}{\varepsilon}
\newtheorem{theorem}{Theorem}[section]
\newtheorem{cor}[theorem]{Corollary}
\newtheorem{lemma}[theorem]{Lemma}
\newtheorem{defi}[theorem]{Definition}
\newtheorem{prob}{Problem}
\newtheorem{ex}{Example}
\theoremstyle{remark}
\newtheorem*{remark}{Remark}
\title{Systolic inequalities on the sphere from symplectic embeddings}
\date{}
\author{Brayan Ferreira}
\begin{document}
\maketitle

\begin{abstract}
We use properties of symplectic capacities that were recently defined by Hutchings to obtain upper bounds on the minimal action of Reeb orbits on fiberwise star-shaped hypersurfaces $\Sigma \subset T^*S^2$. In addition, we introduce the notion of a fiberwise $\beta$-balanced hypersurface $\Sigma \subset T^*S^2$ and establish upper bounds for the systole in terms of $\beta$ and geometric data, in the case of Riemannian metrics on $S^2$ satisfying this property. Finally, under the assumption of antipodal symmetry, we provide a non-sharp estimate of how fiberwise balanced a $\delta$-pinched metric is.
\end{abstract}

\tableofcontents

\section{Introduction}
Let $(Y,\lambda)$ be a contact three-dimensional manifold, i.e., $\lambda$ is a $1$-form such that $\lambda \wedge d\lambda \neq 0$. The \emph{Reeb vector field} is defined as the unique vector field $R$ satisfying the equations $d\lambda(R,\cdot) = 0$ and $\lambda(R) = 1$ on $Y$.
The famous Weinstein conjecture says that the Reeb flow on a closed contact manifold always admits a periodic trajectory. The conjecture is proved in dimension $3$ by Taubes \cite{taubes2007seiberg}. Given a periodic Reeb orbit $\gamma \colon \R/T\Z \to Y$, we define its action by
$$\mathcal{A}_\lambda(\gamma) = \int_\gamma \lambda = T.$$
The aim of this paper is to obtain upper bounds on the minimal action
$$\mathcal{A}_{min}(Y,\lambda) := \min\{ \mathcal{A}_\lambda(\gamma) \vert \ \gamma \ \text{periodic Reeb orbit on} \ Y\},$$
for specific contact closed three-dimensional manifolds in the cotangent bundle of the two-dimensional sphere, $T^*S^2$. In what follows, we denote by $S^2 \subset \R^3$ the unit sphere and by $g_0$ the round metric inherited from the Euclidean space.

In \cite{hutchings2022elementary}, Hutchings defined a sequence 

$$ 0 = c_0(X,\omega) \leq c_1(X,\omega) \leq c_2(X,\omega) \leq \ldots \leq +\infty$$
of numerical invariants (symplectic capacities) for any four dimensional symplectic manifold $(X,\omega)$. These numbers are defined as minimax values of energies of suitable pseudoholomorphic curves with point constraints and satisfy nice properties. We recall those we shall use in this work.

\begin{theorem}[{\cite[Theorem 6]{hutchings2022elementary}}]\label{ckprop}
The numbers $c_k$ satisfy the following properties:
\begin{enumerate}
\item (Conformality) $c_k(X,r \omega) = r c_k(X,\omega)$ for any $r>0$.
\item (Monotonicity) If there exists a symplectic embedding $(X_1,\omega_1) \hookrightarrow (X_2,\omega_2)$, then $c_k(X_1,\omega_1) \leq c_k(X_2,\omega_2)$.
\item (Spectrality) If $(X,\omega)$ is a four-dimensional Liouville domain with boundary $Y$, then for each $k$ such that $c_k(X,\omega)<\infty$, there exists an orbit set $\alpha=\{(\gamma_i,m_i)\}$ in $Y$ with $c_k(X,\omega) = \mathcal{A}(\alpha) = \sum_i m_i \mathcal{A}_\lambda(\gamma_i)$.
\item (Ball) The numbers $c_k$ for the \emph{ball of capacity $a$},
$$B(a) = \{x \in \R^4 \mid \pi \Vert x \Vert^2 \leq a\},$$
are given by $c_k(B(a),\omega_0) = da$, where $d$ is the unique nonnegative integer with
$$d^2+d \leq 2k \leq d^2+3d.$$
Here $\omega_0 = dx_i \wedge dy_i$ denotes the standard symplectic form on $\R^4$. In particular, $c_1(B(a),\omega_0) = a$.
\item (Round metric) The numbers $c_k$ for the unit disk cotangent bundle $D^*_{g_0}(1)S^2$ are given by
$$c_k(D^*_{g_0}(1)S^2,\omega_{can}) = \min \{2\pi(m+n) \mid m,n \in \mathbb{N}, \ (m+1)(n+1) \geq k+1\},$$
where $\omega_{can}$ denotes the canonical symplectic form on the cotangent bundle $T^*S^2$. In particular, $c_1(D_{g_0}^*(1)S^2,\omega_{can}) = 2\pi$.
\end{enumerate}
\end{theorem}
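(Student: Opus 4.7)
The plan is to unpack Hutchings's definition of $c_k(X,\omega)$ as a minimax of Hofer-type energies of punctured pseudoholomorphic curves in a symplectic completion of $X$ passing through $k$ prescribed generic points, and then to verify each of the five items either by a formal scaling/functoriality argument, an SFT-compactness argument, or an explicit computation on a homogeneous model.

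\emph{Conformality} and \emph{Monotonicity} should be essentially formal consequences of the definition. Replacing $\omega$ by $r\omega$ multiplies the $\omega$-energy of every admissible curve by $r$, and the set of admissible curves does not change; so the minimax rescales by $r$. For monotonicity, a symplectic embedding $\varphi\colon(X_1,\omega_1)\hookrightarrow(X_2,\omega_2)$ allows one to push $k$ generic point constraints through $\varphi$, so that any family of $J$-curves in $X_1$ witnessing $c_k(X_1,\omega_1)$ also witnesses an upper bound for $c_k(X_2,\omega_2)$ after composing with $\varphi$; I expect these two parts to be routine.

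For \emph{Spectrality}, I would work on the Liouville completion $\hat X = X \cup_Y \bigl([0,\infty)\times Y\bigr)$ with an SFT-admissible almost complex structure on the cylindrical end. Since $c_k(X,\omega)<\infty$, a minimizing sequence of curves of energies converging to $c_k(X,\omega)$ exists. SFT-compactness in the sense of Bourgeois--Eliashberg--Hofer--Wysocki--Zehnder produces a (possibly broken) limit whose top-level positive ends are asymptotic to Reeb orbits $\gamma_i$ on $Y$ with multiplicities $m_i$. Applying Stokes to $d\lambda$ on the cylindrical end identifies the $\omega$-energy of the curve with $\sum_i m_i \mathcal{A}_\lambda(\gamma_i)$, which then equals $c_k(X,\omega)$.

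The \emph{Ball} and \emph{Round metric} items should follow by combining spectrality with the explicit structure of the Reeb flows. On $\partial B(a)$ the flow is the Hopf flow with a single simple orbit of action $a$, so every orbit set has action a nonnegative integer multiple of $a$; the inequality $d^2+d\leq 2k\leq d^2+3d$ is then the standard count of which degree $d$ is needed for a curve in the compactification to pass through $k$ generic points. For $\partial D^*_{g_0}(1)S^2\cong \mathbb{RP}^3$ the flow is the geodesic flow, every simple orbit is a great circle of action $2\pi$, and two transverse families of fibers produce orbit sets indexed by pairs $(m,n)\in \mathbb{N}^2$; the condition $(m+1)(n+1)\geq k+1$ is the corresponding Riemann--Roch/point-condition count. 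I expect the main obstacle to be spectrality: it requires carefully constructed moduli spaces of punctured $J$-holomorphic curves with marked points, an honest invocation of SFT-compactness, and enough transversality to read off the limiting orbit set, all of which are established in Hutchings's original construction.
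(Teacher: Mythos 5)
This statement is imported wholesale from Hutchings (cited as \cite[Theorem 6]{hutchings2022elementary}); the paper does not prove it. The only part the author comments on is item (5): the text just after the theorem explains that the formula for $c_k(D^*_{g_0}(1)S^2,\omega_{can})$ is deduced by sandwiching $D^*_{g_0}(1)S^2$ between $\mathrm{int}\,P(2\pi,2\pi)$ and $S^2\times S^2$ via the symplectic embeddings of \cite{ferreira2021symplectic} and \cite{oakleyusher}, invoking monotonicity, and observing that Hutchings's explicit toric-domain computations give the \emph{same} formula at both ends of the sandwich. So the paper's route for the round-metric value is purely an embedding/monotonicity argument.

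Your sketch diverges from this in a way that contains a genuine gap. You try to read off the round-metric formula directly from the Reeb dynamics on $S^*_{g_0}S^2\cong\mathbb{RP}^3$, claiming that ``two transverse families of fibers produce orbit sets indexed by pairs $(m,n)\in\mathbb{N}^2$'' and that $(m+1)(n+1)\geq k+1$ is ``the corresponding Riemann--Roch/point-condition count.'' This does not reflect what is going on: the space of simple Reeb orbits on $\mathbb{RP}^3$ is a full $2$-parameter family of great-circle lifts (not two discrete families), and spectrality alone only tells you the answer is an integer multiple of $2\pi$. The actual identification of the multiple, and the pair $(m,n)$, comes from the toric combinatorics of the polydisk $P(2\pi,2\pi)$ and of $S^2\times S^2$, transferred to $D^*_{g_0}(1)S^2$ by the sandwich argument; there is no direct ``Riemann--Roch count'' on $\mathbb{RP}^3$ in the literature that you can point to here. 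A similar slip appears in item (4): you say the Hopf flow on $\partial B(a)$ has ``a single simple orbit,'' whereas it has an $S^2$-worth of simple orbits, all of action $a$; your conclusion that the spectrum is $a\mathbb{Z}_{\geq 0}$ survives, but the degree $d$ with $d^2+d\leq 2k\leq d^2+3d$ is again pinned down by the explicit toric computation in Hutchings, not by the heuristic you give. For items (1)--(3), your descriptions (energy rescaling, transporting point constraints, SFT compactness plus Stokes) are consistent in spirit with Hutchings's setup, but since the paper under review proves none of this and merely cites Hutchings, the honest comparison is that you are attempting to reprove an external black box, and the parts you attempt to reprove by dynamics alone are not actually established that way.
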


As mentioned in \cite[Remark 14]{hutchings2022elementary}, it follows from \cite[Theorem 1.1]{ferreira2021symplectic} and \cite[Lemma 2.3]{oakleyusher} that there exist symplectic embeddings
$$(\mathrm{int} P(2\pi,2\pi),\omega_0) \hookrightarrow (D_{g_0}^*(1)S^2,\omega_{can}) \hookrightarrow (S^2 \times S^2, \sigma(2\pi) \oplus \sigma(2\pi)),$$
where $P(a,b)$ is the symplectic polydisk
$$P(a,b) =\{(z_1,z_2) \in \C^2 \mid \pi \vert z_1 \vert^2 \leq a, \  \vert z_2 \vert^2 \leq b\},$$
$\omega_0$ is the standard symplectic form on $\R^4 = \C^2$ and $\sigma(A)$ is the area form on $S^2$ such that $\int_{S^2} \sigma(A) = A$.

Therefore, by the monotonicity property, we have 
$$c_k(\mathrm{int} P(2\pi,2\pi),\omega_0)\leq c_k(D_{g_0}^*(1)S^2,\omega_{can}) \leq c_k(S^2 \times S^2, \sigma(2\pi) \oplus \sigma(2\pi)),$$
for every $k$. The round metric property in Theorem \ref{ckprop} follows from Hutchings' computations for the polydisks $P(a,b)$ and $S^2 \times S^2$, see \cite[Theorem 17]{hutchings2022elementary}.

Before we state the first result of this work, we recall that given a Riemannian (or more generally, Finsler) metric on a manifold $N$, say $g$, one has Liouville domains associated to it inside the cotangent bundle $T^*N$. In fact, for each $r>0$, we define the \emph{disk cotangent bundle of radius $r$ with respect to $g$} as being the manifold
$$D^*_g(r)N = \{\nu \in T^*N \mid \sqrt{g^*(\nu,\nu)}\leq r\}.$$
Here $g^*$ denotes the dual metric defined by $g$, that is, we have
$$g^*(\nu_1,\nu_2) = g((g^b)^{-1}(\nu_1),(g^b)^{-1}(\nu_2)),$$
where $g^b$ is the vector bundle isomorphism
\begin{align*}
g^b \colon TN &\to T^*N \\ u &\mapsto g(u,\cdot).
\end{align*}
More generally, one can consider a Finsler metric $F\colon TN \to [0,+\infty)$ and define
$$D_F^*(r)N = \{\nu \in T^*N \mid F^*(\nu)\leq r\},$$
where $F^*$ is the (co)-Finsler (or Cartan) metric $F^* \colon T^*N \to \R$ defined by $F^*(\nu) = F(\mathcal{L}^{-1}(\nu))$, where $\mathcal{L}\colon TN \to T^*N$ is the Legendre transform. In this case, it is well known that the canonical symplectic form
$$\omega_{can} = \sum_i dp_i \wedge dq_i$$
restricts to a symplectic structure on $D^*_F(r)N$ such that the boundary
$$\partial D^*_F(r)N = S^*_F(r)N = \{\nu \in T^*N \mid F^*(\nu) = r\}$$
is a contact manifold equipped with the tautological one form
$$\lambda = \sum_i p_idq_i.$$
Moreover, the Reeb flow associated with $\lambda$ coincides with the (co)-geodesic flow for $F$. In particular, the Reeb trajectories are given by $\mathcal{L}(\gamma, \dot{\gamma})$, where $\gamma$ is a geodesic on $N$ such that $F(\gamma) = r$. Also, the action as a Reeb orbit $\mathcal{A}_{\lambda}(\mathcal{L}(\gamma,\dot{\gamma}))$ coincides with the length of the geodesic $L(\gamma) = \int F(\gamma,\dot{\gamma})$. For these facts and more connections between Contact Topology and Finsler Geometry, we recommend \cite{hryniewicz2013introduccao,dorner2017finsler} or also \cite[Appendix B.1.]{abbondandolo2023entropy}.

Our first result follows from the properties listed in Theorem \ref{ckprop}.

\begin{theorem}\label{systolicineq}
Let $(X,\omega)$ be a four-dimensional Liouville domain with boundary being a contact manifold $(Y,\lambda)$. Suppose that there exists a symplectic embedding $(X,\omega) \hookrightarrow (D^*_{g_0}(R)S^2,\omega_{can})$. Then, we have the inequality
$$\mathcal{A}_{min}(Y,\lambda) \leq 2\pi R.$$
\end{theorem}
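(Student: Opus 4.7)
The plan is to chain together the three properties of $c_1$ listed in Theorem \ref{ckprop}: conformality, monotonicity, and spectrality, using the computed value $c_1(D^*_{g_0}(1)S^2, \omega_{can}) = 2\pi$ as the anchor.

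First I would identify $(D^*_{g_0}(R)S^2, \omega_{can})$ with $(D^*_{g_0}(1)S^2, R\,\omega_{can})$ via the fiberwise rescaling $\nu \mapsto R\nu$; since the tautological one-form scales as $\lambda \mapsto R\lambda$, so does $\omega_{can} = d\lambda$, while the condition $\sqrt{g_0^*(R\nu,R\nu)} \leq R$ is equivalent to $\sqrt{g_0^*(\nu,\nu)} \leq 1$. Combined with conformality, this yields
$$c_1(D^*_{g_0}(R)S^2, \omega_{can}) = R \cdot c_1(D^*_{g_0}(1)S^2, \omega_{can}) = 2\pi R.$$

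Next, monotonicity applied to the hypothesized symplectic embedding $(X,\omega) \hookrightarrow (D^*_{g_0}(R)S^2,\omega_{can})$ gives $c_1(X,\omega) \leq 2\pi R$. In particular $c_1(X,\omega) < \infty$, so spectrality produces an orbit set $\alpha = \{(\gamma_i,m_i)\}$ on $Y$ with $c_1(X,\omega) = \sum_i m_i \mathcal{A}_\lambda(\gamma_i)$. Since $c_1(X,\omega)$ is positive (guaranteed by embedding a small ball into the Liouville domain $X$ and using the ball computation), this orbit set is nonempty; each term $m_i \mathcal{A}_\lambda(\gamma_i)$ is then at least $\mathcal{A}_{min}(Y,\lambda)$, and discarding the other non-negative terms yields $\mathcal{A}_{min}(Y,\lambda) \leq c_1(X,\omega) \leq 2\pi R$.

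There is essentially no hard step here; the only point that requires a moment of care is the identification $(D^*_{g_0}(R)S^2, \omega_{can}) \cong (D^*_{g_0}(1)S^2, R\,\omega_{can})$, which converts the radius $R$ into a conformal factor so that the known value $c_1(D^*_{g_0}(1)S^2,\omega_{can}) = 2\pi$ in Theorem \ref{ckprop} can be used.
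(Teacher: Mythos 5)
Your proposal is correct and follows the same route as the paper's proof: conformality to compute $c_1(D^*_{g_0}(R)S^2,\omega_{can}) = 2\pi R$, monotonicity to bound $c_1(X,\omega)$, and spectrality to conclude. You simply spell out two small points the paper leaves implicit (the fiberwise rescaling identification and the positivity/nonemptiness of the spectral orbit set), which are fine.
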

\begin{proof}
If such an embedding exists, the monotonicity property yields
$$c_1(X_,\omega) \leq c_1(D^*_{g_0}(R)S^2,\omega_{can}).$$
In addition,
$$c_1(D^*(R)S^2,\omega_{can}) = c_1(D^*(1)S^2,R\omega_{can}) = 2\pi R,$$
by the conformality property. The desired inequality follows from the spectrality property, which ensures that $\mathcal{A}_{min}(Y,\lambda) \leq c_1(X,\omega)$.
\end{proof}

On the other hand, one can also study symplectic embeddings in the opposite direction. The monotonicity of $c_k$ yields
\begin{equation}\label{lowerboundc1}
2\pi r = c_1(D_{g_0}^*(r)S^2,\omega_{can}) \leq c_1(X,\omega),
\end{equation}
whenever a symplectic embedding $ (D_{g_0}^*(r)S^2,\omega_{can}) \hookrightarrow (X,\omega)$ exists. Since it is not always true that $c_1(X,\omega)$ coincides with the minimal action of Reeb orbits on the boundary of $X$, we do not recover the opposite direction in Theorem \ref{systolicineq} in general.

\vskip 8pt

\noindent{{\bf Acknowledgments:}}
The author would like to thank Lucas Ambrozio, Urs Frauenfelder, and Felix Schlenk for their helpful conversations and for reading the first draft of this note. Additionally, the author acknowledges the interest and contributions of the participants in the event \emph{Systolic and Diastolic Geometry} (IMPA, March 2025), particularly Alberto Abbondandolo and Johanna Bimmermann, for their input on discussions surrounding Problem \ref{probc_1}.
 
\section{Star-shaped hypersurfaces in $T^*S^2$}
Let $\Sigma$ be a fiberwise star-shaped hypersurface in the cotangent bundle $T^*S^2$, that is,  $\Sigma \subset T^*S^2$ is a smooth compact hypersurface such that each ray emanating from the zero section intersects $\Sigma$ once and transversely. More precisely, for each $q \in S^2$, each positive ray emanating from the origin $0_q \in T^*_qS^2$ intersects $\Sigma_q := \Sigma \cap T^*_qS^2$ in exactly one point and transversely. It is well known that the tautological one-form $\lambda$ restricts to a contact form on $\Sigma$. Therefore, if $X_\Sigma \subset T^*S^2$ is the region enclosed\footnote{That is, for each $q \in S^2$, $X_\Sigma \cap T^*_qS^2$ consists of the segments connecting the origin $0_q \in T^*_qS^2$ and the points in $\Sigma_q$.} by $\Sigma$, we have that $(X_\Sigma, \omega_{can})$ is a four-dimensional Liouville domain with boundary the contact manifold $(\Sigma, \lambda|_\Sigma)$. In this context, Theorem \ref{systolicineq} has the following direct consequence.

\begin{theorem}\label{thm:circumrad}
Let $\Sigma \subset T^*S^2$ be a fiberwise star-shaped hypersurface. Then
$$\mathcal{A}_{min}(\Sigma,\lambda|_\Sigma) \leq 2\pi \mathcal{R}(\Sigma),$$
where $\mathcal{R}(\Sigma)$ is the maximum over all the circumradii:
$$\mathcal{R}(\Sigma) = \max_{\nu \in \Sigma} \sqrt{g_0^*(\nu,\nu)}.$$
\end{theorem}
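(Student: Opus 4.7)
The plan is to realize this statement as a direct corollary of Theorem \ref{systolicineq}, with $R := \mathcal{R}(\Sigma)$. The only thing to verify is the existence of a symplectic embedding of the Liouville domain $(X_\Sigma, \omega_{can})$ into $(D^*_{g_0}(R)S^2, \omega_{can})$; the candidate will just be the inclusion map, which is automatically symplectic since both sides carry the restriction of the same canonical form on $T^*S^2$.

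First I would unpack the definition of $\mathcal{R}(\Sigma)$: by construction every $\nu \in \Sigma$ satisfies $\sqrt{g_0^*(\nu,\nu)} \leq R$, so $\Sigma \subset D^*_{g_0}(R)S^2$. Then I would argue that this containment passes to the enclosed region $X_\Sigma$. Fix $q \in S^2$ and let $\nu \in \Sigma_q$; by fiberwise star-shapedness, the points of $X_\Sigma \cap T^*_q S^2$ lying on the ray through $\nu$ are exactly the segment $\{t\nu : t \in [0,1]\}$. Since $\sqrt{g_0^*(t\nu,t\nu)} = t\sqrt{g_0^*(\nu,\nu)} \leq R$ for all $t\in [0,1]$, the entire segment sits inside $D^*_{g_0}(R)S^2$. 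Varying $\nu$ over $\Sigma_q$ and $q$ over $S^2$ gives $X_\Sigma \subset D^*_{g_0}(R)S^2$.

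With the inclusion $(X_\Sigma, \omega_{can}) \hookrightarrow (D^*_{g_0}(R)S^2, \omega_{can})$ in hand, Theorem \ref{systolicineq} applies to the Liouville domain $X_\Sigma$, whose boundary is precisely $(\Sigma,\lambda|_\Sigma)$, and yields
$$\mathcal{A}_{min}(\Sigma,\lambda|_\Sigma) \leq 2\pi R = 2\pi \mathcal{R}(\Sigma),$$
as desired.

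There is no real obstacle here; the only subtlety worth double-checking is that the pointwise fiberwise condition defining $X_\Sigma$ genuinely forces radial containment in the round disk bundle, which is the elementary observation carried out above. If one wanted a cleaner presentation, one could absorb the verification into a single sentence noting that both $X_\Sigma$ and $D^*_{g_0}(R)S^2$ are fiberwise star-shaped and $\Sigma \subset D^*_{g_0}(R)S^2$ implies $X_\Sigma \subset D^*_{g_0}(R)S^2$ by star-shapedness of the latter with respect to the zero section.
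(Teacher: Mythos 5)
Your proposal is correct and is essentially the paper's own proof: both apply Theorem \ref{systolicineq} with $R = \mathcal{R}(\Sigma)$ by observing that the inclusion $X_\Sigma \subset D^*_{g_0}(\mathcal{R}(\Sigma))S^2$ is a symplectic embedding. You simply spell out the star-shapedness argument for the containment, which the paper leaves implicit.
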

\begin{proof}
It follows from Theorem \ref{systolicineq} and the fact that the inclusion $X_\Sigma \subset D^*_{g_0}(\mathcal{R}(\Sigma))S^2$ is a symplectic embedding.
\end{proof}

Similarly, because of \eqref{lowerboundc1}, we have
\begin{equation}\label{lowerinr}
c_1(X_\Sigma,\omega_{can}) \geq 2\pi \mathpzc{r}(\Sigma),
\end{equation}
where $\mathpzc{r}(\Sigma)$ is the minimum over all the inradii:
$$\mathpzc{r}(\Sigma) = \min_{\nu \in \Sigma} \sqrt{g_0^*(\nu,\nu)},$$
since $D^*(\mathpzc{r}(\Sigma))_{g_0}S^2 \subset X_\Sigma$.

We note that $X_\Sigma$ generalizes the notion of $D^*_F(r)S^2$. In fact, the case of disk cotangent bundles is exactly given by the fiberwise convex examples, i.e., the cases where $\Sigma_q \subset T^*_qS^2$ bounds a convex subset for every $q \in S^2$. Moreover, in the Riemannian case $F(v) = \sqrt{g(v,v)}$, $\Sigma_q$ is an ellipsoid.

Inspired by the definition of a $\delta$-pinched metric on the sphere and a $\delta$-pinched convex set on $\R^{2n}$, we define the following notion.

\begin{defi}
Let $\beta \in (0,1]$. A fiberwise star-shaped hypersurface $\Sigma \subset T^*S^2$ is fiberwise $\beta$-balanced if
$$\left(\frac{\mathpzc{r}(\Sigma)}{\mathcal{R}(\Sigma)}\right)^2\geq \beta.$$
\end{defi}

For examples, see Section \ref{sec:deltabal}. Note that $\beta=1$ occurs exactly in the case where $\Sigma$ is the sphere cotangent bundle of the round metric, $\Sigma = S^*_{g_0}(R)S^2$, for some $R>0$. In particular, this property measures how far the hypersurface $\Sigma$ is from the round metric in some sense. In \cite{abbondandolo2023entropy}, they define the more general notion of \emph{module of starshapedness} comparing a star-shaped hypersurface with all Finsler metrics in a cotangent bundle $T^*Q$. 

We have the following consequence of Theorem \ref{thm:circumrad} and the volume obstruction for symplectic embeddings.

\begin{theorem}\label{thm:volume}
Let $\Sigma \subset T^*S^2$ be a fiberwise $\beta$-balanced hypersurface. Then
$$\mathcal{A}_{min}(\Sigma,\lambda \vert_\Sigma)^2\leq \frac{\mathrm{Vol}(\Sigma,\lambda \vert_\Sigma)}{2\beta},$$
where $\mathrm{Vol}(\Sigma,\lambda \vert_\Sigma) = \int_\Sigma \lambda_\Sigma \wedge d\lambda_\Sigma = \int_{X_\Sigma} \omega_{can}\wedge \omega_{can} = \mathrm{Vol}(X_\Sigma,\omega_{can})$.
\end{theorem}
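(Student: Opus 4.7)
The plan is to combine the upper bound on $\mathcal{A}_{min}$ supplied by Theorem \ref{thm:circumrad} with a lower bound on the Liouville volume of $X_\Sigma$ coming from the largest round disk bundle inscribed in it, using the $\beta$-balanced hypothesis as the bridge between the two estimates.

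First I would exploit the inclusion
$$D^*_{g_0}(\mathpzc{r}(\Sigma))S^2 \subset X_\Sigma$$
which follows from the very definition of $\mathpzc{r}(\Sigma)$. Monotonicity of the symplectic volume then gives
$$\mathrm{Vol}\bigl(D^*_{g_0}(\mathpzc{r}(\Sigma))S^2\bigr) \leq \mathrm{Vol}(X_\Sigma,\omega_{can}) = \mathrm{Vol}(\Sigma,\lambda\vert_\Sigma).$$
The key quantitative input is then the explicit computation of $\mathrm{Vol}(D^*_{g_0}(r)S^2) = \int \omega_{can}\wedge\omega_{can}$. I would perform this via a Fubini-type disintegration in Darboux coordinates $(q,p)$: the fiber $\{g_0^{ij}p_ip_j \leq r^2\}$ over each $q\in S^2$ is an ellipse of Euclidean area $\pi r^2\sqrt{\det g_0(q)}$, and integrating against $\sqrt{\det g_0}\,dq_1\wedge dq_2$ over the round sphere of total area $4\pi$, together with the factor $2$ coming from $\omega_{can}\wedge\omega_{can} = 2\,dp_1\wedge dq_1\wedge dp_2\wedge dq_2$, yields $8\pi^2 r^2$. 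Consequently,
$$\mathpzc{r}(\Sigma)^2 \leq \frac{\mathrm{Vol}(\Sigma,\lambda\vert_\Sigma)}{8\pi^2}.$$

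Next, the fiberwise $\beta$-balanced condition $\mathpzc{r}(\Sigma)^2 \geq \beta\,\mathcal{R}(\Sigma)^2$ upgrades this to
$$\mathcal{R}(\Sigma)^2 \leq \frac{\mathrm{Vol}(\Sigma,\lambda\vert_\Sigma)}{8\pi^2\beta},$$
and Theorem \ref{thm:circumrad} then gives
$$\mathcal{A}_{min}(\Sigma,\lambda\vert_\Sigma)^2 \leq 4\pi^2\mathcal{R}(\Sigma)^2 \leq \frac{\mathrm{Vol}(\Sigma,\lambda\vert_\Sigma)}{2\beta},$$
as desired. The only non-formal step is the volume computation for the round disk bundle, and the main care needed is bookkeeping the correct constant $8\pi^2$ under the paper's convention $\mathrm{Vol}=\int\omega_{can}\wedge\omega_{can}$ rather than the more common $\int\omega_{can}^2/2!$, since a stray factor of $2$ would spoil the clean form $\mathrm{Vol}/(2\beta)$ of the final inequality.
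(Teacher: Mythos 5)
Your proof is correct and follows essentially the same strategy as the paper's: bound $\mathcal{A}_{min}$ by $2\pi\mathcal{R}(\Sigma)$ via Theorem \ref{thm:circumrad}, control $\mathcal{R}(\Sigma)$ by the volume using the inscribed round disk bundle and the $\beta$-balanced hypothesis, and the explicit computation $\int_{D^*_{g_0}(r)S^2}\omega_{can}^2 = 8\pi^2 r^2$. The only cosmetic difference is that you work directly with the inclusion $D^*_{g_0}(\mathpzc{r}(\Sigma))S^2\subset X_\Sigma$, whereas the paper rescales to embed $D^*_{g_0}(\mathcal{R}(\Sigma))S^2$ into $(\mathcal{R}/\mathpzc{r})X_\Sigma$ and invokes the volume obstruction for symplectic embeddings; the scaling factors cancel, so the two are equivalent and yours is marginally more economical.
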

\begin{proof}
We note\footnote{We are just multiplying the factor in the fiber direction.} that 
$$D^*_{g_0}(\mathcal{R}(g))S^2 \subset \left(\frac{\mathcal{R}(g)}{\mathpzc{r}(g)}\right)X_\Sigma.$$
Moreover, we have a natural symplectomorphism between $\left(\left(\frac{\mathcal{R}(g)}{\mathpzc{r}(g)}\right)X_\Sigma,\omega_{can}\right)$ and $\left(X_\Sigma,\frac{\mathcal{R}(g)}{\mathpzc{r}(g)} \omega_{can}\right)$. In particular, there exists a symplectic embedding
$$(D^*_{g_0}(\mathcal{R}(g))S^2,\omega_{can}) \hookrightarrow \left(X_\Sigma,\frac{\mathcal{R}(g)}{\mathpzc{r}(g)} \omega_{can}\right).$$
So, the volume obstruction gives
$$8\pi^2(\mathcal{R}(g))^2 \leq \left(\frac{\mathcal{R}(g)}{\mathpzc{r}(g)}\right)^2 \mathrm{Vol}(X_\Sigma,\omega_{can}).$$
From Theorem \ref{thm:circumrad} and the latter inequality, we obtain
\begin{align*}
\mathcal{A}_{min}(\Sigma,\lambda\vert_\Sigma)^2 &\leq 4\pi^2 \mathcal{R}(\Sigma)^2 \\ &\leq \frac{1}{2}\left(\frac{\mathcal{R}(g)}{\mathpzc{r}(g)}\right)^2 \mathrm{Vol}(X_\Sigma,\omega_{can}) \\ &\leq \frac{\mathrm{Vol}(X_\Sigma,\omega_{can})}{2\beta} \\ &= \frac{\mathrm{Vol}(\Sigma,\lambda\vert_\Sigma)}{2\beta}.
\end{align*}
\end{proof}

While the equalities in Theorem \ref{thm:circumrad} and in Theorem \ref{thm:volume} are attained for the case where $\Sigma$ is a sphere cotangent bundle with respect to the round metric on $S^2$, given a specific class or example of $\Sigma \subset T^*S^2$, one can ask whether there exist finer embeddings than the inclusion.

\begin{prob}
Given a Finsler metric $F$ on $S^2$, compute the numbers
\begin{align*}
\inf &\{R \mid \exists \ (D^*_F(1)S^2,\omega_{can}) \hookrightarrow (D^*_{g_0}(R)S^2,\omega_{can})\} \\ \sup &\{r \mid \exists \  (D^*_{g_0}(r)S^2,\omega_{can}) \hookrightarrow (D^*_{F}(1)S^2,\omega_{can})\}.
\end{align*}
\end{prob}

Because of the discussion above, studying this problem may give good estimates on the \emph{systole}, i.e., the length of the shortest closed geodesic for $F$, $L_{min}(F)$. We observe that obtaining sharp systolic inequalities as in \cite{abbondandolo2017systolic,abbondandolo2021sharp} by means of the strategy discussed in this work corresponds to full flexibility of the symplectic embedding problem, namely, finding volume filling symplectic embeddings.

The existence of such a nontrivial embedding (i.e., one that is better than inclusion) is an interesting problem. The \emph{concave into convex toric domain theorem}, due to Cristofaro-Gardiner in \cite{cristofaro2019symplectic}, suggests the existence of nontrivial embeddings in the case of metrics of revolution on $S^2$. Moreover, results due to Lalonde and McDuff \cite[Lemma 1.2 and Theorem 1.3]{lalonde1995local}  suggest that one can \emph{squeeze} and improve the inclusion if the intersection of the image of the inclusion with the boundary $S^*_{g_0}(\mathcal{R}(\Sigma)) S^2$ does not contain a closed characteristic, i.e., a lift of a great circle on $S^2$. If the intersection does contain a closed characteristic, the situation resembles Gromov's nonsqueezing theorem, and then the inclusion is the best one can do; see e.g. \cite[Theorem 5.5]{abbondandolo2019floer}.

\begin{remark}
It is clear that one can repeat the same discussion for any Liouville domain for which $c_1$ is computed. In particular, for the ball of \emph{capacity} $a$, we have $c_1(B(a),\omega_0) = a$. In this case, given a domain $X \subset \R^4$, we have
$$a \leq c_1(X,\omega_0) \leq A,$$
as long as there exists symplectic embeddings $(B(a),\omega_0)  \hookrightarrow (X,\omega_0) \hookrightarrow (B(A),\omega_0)$.
It follows from the remarkable recent work due to Abbondandolo, Edtmair and Kang \cite{abbondandolo2024closed} that 
$$c_1(X,\omega_0) = \mathcal{A}_{min}(\partial X, \lambda_0),$$
whenever $X \subset \R^4$ is a strictly convex domain with smooth boundary $\partial X$ and where
$$\lambda_0 = \frac{1}{2}\sum_{i=1}^2(y_i dx_i - x_i dy_i)$$
is the standard Liouville form on $\R^4$. Therefore, one recovers the fact
$$\pi r(X)^2 \leq \mathcal{A}_{min}(\partial X,\lambda_0) \leq \pi R(X)^2,$$
where $r(X) = \min_{x \in X} \Vert x \Vert$ and $R(X) = \max_{x \in X} \Vert x \Vert$. The lower bound is due to Croke-Weinstein and the upper bound to Ekeland, see \cite[Theorem 4 and Proposition 5]{ekeland2012convexity}.

A similar story holds for $\R P^2$ via the symplectic embeddings
$$(\mathrm{int} B(2\pi), \omega_0) \hookrightarrow (D^*_{g_0}\R P^2,\omega_{can}) \hookrightarrow (\C P^2(2\pi),\omega_{FS}),$$
where we use $g_0$ to indicate the induced round metric on $\R P^2$ and $\C P^2(2\pi)$ indicates the scaled $\C P^2$ so that a line has symplectic area $2\pi$ using the Fubini-Study form $\omega_{FS}$, see \cite[Theorem 1.1]{ferreira2021symplectic} and \cite[Theorem 17]{hutchings2022elementary}.
\end{remark}

\begin{prob}\label{probc_1}
For which Finsler metrics on $S^2$ does $c_1(D^*_F(1)S^2,\omega_{can}) = L_{min}(F)$ hold?
\end{prob}

We note that this equality cannot hold in general. As explained in \cite{ferreira2024elliptic}, given small $\varepsilon>0$, for the \emph{dumbbell metric} $g$ on $S^2$, the systole has length $2\pi \varepsilon$ while $c_1(D^*_g(1)S^2,\omega_{can}) \geq 2 \pi$ whenever the dumbbell contains a hemisphere of the round sphere of constant curvature $K=1$. Moreover, when the metric corresponds to an ellipsoid of revolution $\mathcal{E}(1,1,c) \subset \R^3$, see Example \ref{ellipexample}, it follows from \cite[Theorem 1.2]{ferreira2023gromov} that $c_1(D_g^*(1)S^2,\omega_{can})$ is also greater than the systole for $c > 1$.

\section{Riemannian metrics on $S^2$}

\subsection{Fiberwise $\beta$-balanced metrics}\label{sec:deltabal}
Let $g$ be a Riemannian metric on $S^2$. From now on, we denote by $\mathcal{R}(g):= \mathcal{R}(S^*_g(1)S^2)$ and $\mathpzc{r}(g):= \mathpzc{r}(S^*_g(1)S^2)$ the circumradius and inradius previously defined in the case when $\Sigma = S^*_g(1)S^2$. We say that $g$ is \emph{fiberwise $\beta$-balanced}, $\beta \in (0,1]$, if $S^*_g(1)S^2$ has this property, i.e., if
$$\left(\frac{\mathpzc{r}(g)}{\mathcal{R}(g)}\right)^2\geq \beta.$$
Suppose first that $g$ is conformal to the round one, $g = e^{2\varphi}g_0$ for some smooth function $\varphi \colon S^2 \to \R$. In this case, we have $g^* = e^{-2\varphi}g_0^*$ and, hence,
\begin{align*}
\mathcal{R}(g) &= \max_{g^*(\nu,\nu)=1} \sqrt{g_0^*(\nu,\nu)} \\ &= \max_{e^{-2\varphi}g_0^*(\nu,\nu)=1} \sqrt{g_0^*(\nu,\nu)} \\ &= \max_{p\in S^2} e^{\varphi(p)}.
\end{align*}
In particular, Theorem \ref{thm:circumrad} gives $L_{min}(g) \leq 2\pi e^{\max \varphi}$, for $g = e^{2\varphi}g_0$.

Similarly, we have $\mathpzc{r}(g) = \min_{p\in S^2} e^{\varphi(p)}$, and hence, 
$$\left(\frac{\mathpzc{r}(g)}{\mathcal{R}(g)}\right)^2 = \frac{\min_{p\in S^2}e^{2\varphi(p)}}{\max_{p\in S^2}e^{2\varphi(p)}}.$$
Therefore, a conformal metric $g = e^{2\varphi}g_0$ is fiberwise $\beta$-balanced if, and only if, $\min_{p\in S^2} e^{2\varphi(p)} \geq \beta \max_{p\in S^2} e^{2\varphi(p)}$, or equivalently,
\begin{equation}\label{oscilation}
\mathrm{osc}(\varphi):= \max_{p\in S^2} \varphi(p) - \min_{p \in S^2} \varphi(p) \leq -\frac{1}{2}\ln \beta.
\end{equation}
Thus, if $\varphi$ is $\varepsilon$-small in the $C^0$ topology, $\Vert \varphi \Vert_{C^0}<\varepsilon$, we have
$$\mathrm{osc}(\varphi) \leq 2\Vert \varphi \Vert_{C^0} < 2\varepsilon,$$
and, hence, $e^{2\varphi}g_0$ is fiberwise $(e^{-4\varepsilon})$-balanced.

In particular, if a metric is sufficiently $C^0$-close to the round one, then it is sufficiently fiberwise balanced. Recall that by the Uniformization Theorem, every Riemannian metric $g$ on $S^2$ is isometric to a conformal one $e^{2u}g_0$.
 
From now on, we set
$$K_{min} = \min_{p \in S^2} K_g(p) \quad \text{and} \quad K_{max} = \max_{p \in S^2} K_g(p).$$ Recall that given $\delta \in (0,1]$, a Riemannian metric $g$ is said to be $\delta$-pinched if it is positively curved and $K_{min}/K_{max}\geq \delta$.

A priori, the property of being fiberwise $\beta$-balanced is not related to a pinching condition in the curvature. The next example illustrates that the two properties can be related in some cases.

\begin{ex}\label{ellipexample}
Let $\mathcal{E}(a,b,c) \subset \R^3$ denote the usual ellipsoid defined by the equation
$$\frac{x^2}{a^2}+\frac{y^2}{b^2}+\frac{z^2}{c^2} = 1.$$
Consider the linear map
\begin{align*}
f_{a,b,c}\colon S^2 &\to \mathcal{E}(a,b,c) \\ (x,y,z) &\mapsto (ax,by,cz).
\end{align*}
Define the \emph{ellipsoid metric} $g_{a,b,c}$ on $S^2$ by setting $g_{a,b,c} = f_{a,b,c}^*g_0$, for $a,b,c \in \R_{>0}$, where $g_0$ is the restriction of the Euclidean metric to the ellipsoid. Suppose that $a\leq b\leq c$. It is well known that the minimum and the maximum curvature are given by
$$K_{min} = \frac{a^2}{b^2c^2} \quad \text{and} \quad K_{max} = \frac{c^2}{a^2b^2},$$
respectively, see e.g. \cite[Corollary 3.5.12]{klingenberg1995riemannian}. In particular, we have
$$\frac{K_{min}}{K_{max}} = \left(\frac{a}{c}\right)^4.$$
Moreover, using Lagrange multipliers, one can compute
$$\mathpzc{r}(g_{a,b,c}) = \min\{a,b,c\} = a \quad \text{and} \quad \mathcal{R}(g_{a,b,c}) = \max\{a,b,c\} = c.$$
Therefore,
$$\left(\frac{\mathpzc{r}(g_{a,b,c})}{\mathcal{R}(g_{a,b,c})}\right)^2 = \left(\frac{a}{c}\right)^2.$$
In this case, we conclude that an ellipsoid metric is fiberwise $\sqrt{\delta}$-balanced if, and only if, it is $\delta$-pinched in the classical sense of curvature, which happens exactly when
$$\left(\frac{\min\{a,b,c\}}{\max\{a,b,c\}}\right)^2 \geq \sqrt{\delta}.$$ 
\end{ex}

We shall obtain systolic estimates in terms of geometric data with respect to a Riemannian metric on the sphere.

\subsubsection{Systole and Area}
The comparison of the length of the shortest closed geodesic on a sphere and its area started with Croke \cite{croke1988area}, was then improved by Rotman and Nabutovsky, Sabourau, and the best known upper bound is 
\begin{equation}\label{eq:rotman}
L_{min}(g)^2 \leq 32 \mathrm{Area}(S^2,g),
\end{equation}
due to Rotman \cite{rotman2006length}.
 
Given a Riemannian metric $g$ on $S^2$, we have
$$\mathrm{Vol}(D^*_g(1)S^2,\omega_{can}) = 2\pi \mathrm{Area}(S^2,g),$$
where $\mathrm{Area}(S^2,g) = \int_{S^2}dA_g$. Hence, Theorem \ref{thm:volume} has the following direct consequence. 
\begin{theorem}\label{thm:sysarea}
Let $g$ be a fiberwise $\beta$-balanced Riemannian metric on the sphere $S^2$. Then
$$L_{min}(g)^2 \leq \frac{\pi}{\beta}\mathrm{Area}(S^2,g).$$
\end{theorem}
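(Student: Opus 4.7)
The plan is to deduce this directly from Theorem \ref{thm:volume} applied to the unit sphere cotangent bundle $\Sigma = S^*_g(1)S^2$, which by hypothesis is fiberwise $\beta$-balanced. There are no new geometric obstacles; the argument is essentially a translation between the symplectic quantities appearing in Theorem \ref{thm:volume} and the Riemannian quantities $L_{min}(g)$ and $\mathrm{Area}(S^2,g)$.

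First I would identify the minimal Reeb action with the systole. As recalled in the introduction, the Reeb flow of $\lambda|_{\Sigma}$ on $\Sigma = S^*_g(1)S^2$ is the co-geodesic flow of $g$, and closed Reeb orbits correspond via the Legendre transform to closed geodesics of $g$ with action equal to length. Hence
$$\mathcal{A}_{min}(\Sigma,\lambda|_{\Sigma}) = L_{min}(g).$$

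Next I would identify the contact volume. As noted immediately before the statement, the symplectic volume of the disk cotangent bundle satisfies
$$\mathrm{Vol}(\Sigma,\lambda|_{\Sigma}) = \mathrm{Vol}(D^*_g(1)S^2,\omega_{can}) = 2\pi\,\mathrm{Area}(S^2,g).$$

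Finally, inserting these two identities into the conclusion of Theorem \ref{thm:volume}, which gives $\mathcal{A}_{min}(\Sigma,\lambda|_\Sigma)^2 \leq \mathrm{Vol}(\Sigma,\lambda|_\Sigma)/(2\beta)$, yields
$$L_{min}(g)^2 \leq \frac{2\pi\,\mathrm{Area}(S^2,g)}{2\beta} = \frac{\pi}{\beta}\,\mathrm{Area}(S^2,g),$$
which is the claimed inequality. Since Theorem \ref{thm:volume} already packages the symplectic embedding argument (inclusion of a round disk cotangent bundle into a rescaled $X_\Sigma$ together with the volume obstruction and Theorem \ref{thm:circumrad}), there is no real obstacle here beyond checking these standard correspondences.
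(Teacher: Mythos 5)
Your argument is exactly the paper's: Theorem~\ref{thm:sysarea} is stated as a direct consequence of Theorem~\ref{thm:volume} applied to $\Sigma = S^*_g(1)S^2$, using the identification of minimal Reeb action with the systole and the volume identity $\mathrm{Vol}(D^*_g(1)S^2,\omega_{can}) = 2\pi\,\mathrm{Area}(S^2,g)$. Your write-up simply makes these translations explicit; it is correct and matches the intended proof.
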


While the Rotman bound \eqref{eq:rotman} is universal and does not depend on the metric, our bound given by Theorem \ref{thm:sysarea} is not good when $\beta$ is close to zero. Nevertheless, for $\beta \geq \pi/32 \approx 0.098$, our bound is finer than the universal one in \eqref{eq:rotman}.

We note that for spheres of revolution and for sufficiently (curvature) pinched metrics $(\delta >(4+\sqrt{7})/8) \approx 0.83)$, Abbondandolo, Bramham, Hryniewicz and Salomão obtained the sharp systolic inequality
\begin{equation}\label{eq:abhs}
L_{min}^2(g) \leq \pi \mathrm{Area}(S^2,g),
\end{equation}
with equality if, and only if, $g$ is Zoll, using symplectic tools, see \cite{abbondandolo2017systolic} and \cite{abbondandolo2021sharp}. Recently, Vialaret obtained also sharp systolic inequalities for some $S^1$-invariant contact forms on closed three manifolds \cite{vialaret2024sharp}. We note that Theorem \ref{thm:sysarea} also holds for Finsler metrics when using the \emph{Holmes-Thompson} area.

\subsubsection{Systole and first Laplacian eigenvalue}
Given a closed Riemannian manifold $(M,g)$, the first Laplacian eigenvalue $\lambda_1(g)$ is defined as the smallest positive number that satisfies $\Delta_g u + \lambda_1(g) u=0$ for some not identically zero $C^2$-function $u \colon M \to \R$, where $\Delta_g u = \mathrm{div}(\grad_g u)$ denotes the Laplace-Beltrami operator, and $\grad_g u$ is the gradient of the function $u$ with respect to the metric $g$. Using the variational description of $\lambda_1$, Hersch proved the following upper bound
\begin{equation}\label{eq:hersch}
\lambda_1(S^2,g) \leq \frac{8\pi}{\mathrm{Area}(S^2,g)},
\end{equation}
and the equality holds if, and only if, $g$ has constant curvature, see \cite{hersch1970quatre}.
Together with Theorem \ref{thm:sysarea}, we obtain the following consequence.

\begin{cor}\label{cor:lambda}
Let $g$ be a fiberwise $\beta$-balanced Riemannian metric on $S^2$. Then
$$L_{min}(g)^2 \leq \frac{8\pi^2}{\beta \lambda_1(S^2,g)}.$$
\end{cor}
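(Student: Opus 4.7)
The plan is to combine Theorem \ref{thm:sysarea} with Hersch's inequality \eqref{eq:hersch} in a direct chain of inequalities. Since both ingredients are already stated in the excerpt, no new geometric input is needed; the entire task reduces to eliminating the area between the two bounds.

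First I would rewrite Hersch's inequality \eqref{eq:hersch} to isolate the area as an upper bound:
$$\mathrm{Area}(S^2,g) \leq \frac{8\pi}{\lambda_1(S^2,g)}.$$
This is valid for any Riemannian metric on $S^2$, with no balancing assumption required.

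Next, using the assumption that $g$ is fiberwise $\beta$-balanced, I would invoke Theorem \ref{thm:sysarea} to bound the squared systole in terms of the area:
$$L_{min}(g)^2 \leq \frac{\pi}{\beta}\mathrm{Area}(S^2,g).$$
Substituting the Hersch bound on the right-hand side yields
$$L_{min}(g)^2 \leq \frac{\pi}{\beta} \cdot \frac{8\pi}{\lambda_1(S^2,g)} = \frac{8\pi^2}{\beta\lambda_1(S^2,g)},$$
which is the claimed inequality.

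There is no real obstacle here — the content of the corollary lies entirely in Theorem \ref{thm:sysarea}, which in turn rests on the symplectic capacity machinery developed earlier; Hersch's bound is classical and cited. The only thing worth flagging is that one should verify the direction of both inequalities is compatible with the substitution (an upper bound on the area combines with a bound of the form $L_{min}^2 \leq \text{const}\cdot \mathrm{Area}$ to give an upper bound on $L_{min}^2$), which is indeed the case.
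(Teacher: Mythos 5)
Your proposal is correct and matches the paper's (implicit) argument exactly: the corollary is stated as a direct consequence of Theorem \ref{thm:sysarea} combined with Hersch's inequality \eqref{eq:hersch}, precisely the chain you give. Nothing to add.
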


\subsection{Positively curved metrics on $S^2$}

\subsubsection{Systole and Diameter}

Obtaining upper bounds on the systole in terms of the diameter, $D(S^2,g)$, on spheres also starts with Croke \cite{croke1988area} and has an interesting history, see \cite{adelstein2020length}. The best known upper bounds are given by
$$L_{min}(g)\leq 4D(S^2,g),$$
for a general Riemannian metric $g$ on $S^2$, due to Nabutovsky and Rotman and independently Sabourau, and
\begin{equation}\label{eq:ianpal}
L_{min}(g) \leq 3D(S^2,g),
\end{equation}
for non-negatively curved metrics, due to Adelstein and Pallete. Both bounds can be found in the latter reference.

Using Hersch's upper bound \eqref{eq:hersch} and a lower bound due to Zhong-Yang, Calabi and Cao obtained the following interesting estimate \cite{calabi1992simple}. Let $g$ be a Riemannian metric on $S^2$ with nonnegative curvature. Then
\begin{equation}\label{eq:calabicao}
\mathrm{Area}(S^2,g) \leq \frac{8}{\pi}D(S^2,g)^2.
\end{equation}
Putting this together with Theorem \ref{thm:sysarea}, we obtain the following result.

\begin{cor}\label{cor:diam1}
Let $g$ be a fiberwise $\beta$-balanced Riemannian metric on the sphere $S^2$ with nonnegative curvature. Then
$$L_{min}(g) \leq \frac{2\sqrt{2}}{\sqrt{\beta}}D(S^2,g).$$
\end{cor}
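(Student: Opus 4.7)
The proof is a direct combination of two ingredients already assembled in the excerpt, so the plan is essentially bookkeeping. First I would invoke Theorem \ref{thm:sysarea}, applicable since $g$ is assumed fiberwise $\beta$-balanced, to get
$$L_{min}(g)^2 \leq \frac{\pi}{\beta}\mathrm{Area}(S^2,g).$$
Then, using the nonnegative curvature assumption, I would apply the Calabi--Cao inequality \eqref{eq:calabicao} to bound the area by $\frac{8}{\pi}D(S^2,g)^2$. Chaining the two estimates makes the $\pi$ factors cancel and produces $L_{min}(g)^2 \leq \frac{8}{\beta}D(S^2,g)^2$, and taking square roots yields the claimed bound $L_{min}(g) \leq \frac{2\sqrt{2}}{\sqrt{\beta}}D(S^2,g)$.

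There is really no obstacle here beyond ensuring the hypotheses of the two inputs are satisfied: Theorem \ref{thm:sysarea} requires only the $\beta$-balanced condition, and \eqref{eq:calabicao} requires only nonnegative curvature, both of which are in the hypothesis of the corollary. The only modest subtlety worth flagging in the write-up is that the constant $2\sqrt{2}/\sqrt{\beta}$ is strictly worse than the Adelstein--Pallete bound $3D(S^2,g)$ from \eqref{eq:ianpal} precisely when $\beta<8/9$, so the corollary becomes informative compared to \eqref{eq:ianpal} only in the strongly balanced regime $\beta>8/9$. I would not include this comparison in the proof itself but it is a natural remark to make immediately afterwards.
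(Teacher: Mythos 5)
Your proof is correct and is exactly the argument the paper gives: apply Theorem \ref{thm:sysarea} (using the $\beta$-balanced hypothesis), then substitute the Calabi--Cao bound \eqref{eq:calabicao} (using nonnegative curvature), let the $\pi$ factors cancel, and take square roots. Your closing remark about the comparison with the Adelstein--Pallete bound for $\beta \geq 8/9$ also matches the discussion the paper makes immediately after the corollary.
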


Note that our bound is far from good when $g$ is not sufficiently balanced. In fact, this bound in Corollary \ref{cor:diam1} is finer than the more general one due to Adelstein and Pallete in \eqref{eq:ianpal} just for fiberwise $\beta$-balanced metrics with $\beta \geq 8/9$.

Using the sharp inequality $\eqref{eq:abhs}$ and a (curvature) pinched version of \eqref{eq:calabicao}, Adelstein and Pallete obtained the following sharp result: for $\delta$-pinched metrics with $\delta>(4+\sqrt{7})/8\approx 0.83$,
$$L_{min}(g)\leq \frac{2}{\sqrt{\delta}}D(S^2,g)$$
holds with equality if, and only if, the sphere is round.

\subsection{Curvature Pinching vs Fiberwise Balancing}
Finally, we obtain a non-sharp estimate of how a $\delta$-pinched is fiberwise $\beta = \beta(\delta)$-balanced. The motivation here is the following. Let $g= e^{2u}g_0$ be a conformal metric on the sphere. From $\eqref{oscilation}$, we know that $g$ is fiberwise $\beta$-balanced if, and only if, $\mathrm{osc} (u) \leq -\frac{1}{2}\ln \beta$. In this case, we can estimate
\begin{equation}\label{inequalities}
\mathrm{osc}(u) = \mathrm{osc}(u_0) \leq 2\Vert u_0 \Vert_{C^0} \leq 2C_S \Vert u_0 \Vert_{H^2} \leq 2C_SC_P \Vert \Delta_{g_0} u_0 \Vert_{L^2},
\end{equation}
where $u_0 = u - \bar{u}$ is the zero average part of $u$, $C_S$ is a constant coming from the Sobolev embedding $H^2(S^2) \hookrightarrow C^0(S^2)$ and $C_P$ comes from integration by parts and Poincaré Inequality for $u_0$. In Appendix \ref{sec:app}, we shall check that
\begin{align} 
C_S &\leq \frac{1}{\sqrt{4\pi}}\left(\sum_{l=0}^\infty \frac{(2l+1)^2}{1+l(l+1)+l^2(l+1)^2}\right)^{1/2} < \frac{1}{2}\sqrt{\frac{1}{\pi}\left(\frac{\pi^2}{3}+2\right)} \label{ineq:csupper} \\
C_P &\leq \frac{\sqrt{7}}{2},\label{ineq:cpupper}
\end{align}
see Lemma \ref{lemma:cs} and Lemma \ref{lemma:cp}, respectively.

From now on, $\Delta$ and $\nabla$ denote the Laplacian and the gradient with respect to the round metric $g_0$, respectively. 

From Gauss Equation, we have
\begin{align}\label{curveq}
K_g &= e^{-2u}(K_{g_0} - \Delta u) \nonumber \\ &= e^{-2u}(1-\Delta u).
\end{align}
Therefore, $\Delta u = 1 - K_g e^{2u}$ and assuming a $\delta$-pinching condition, say $\delta \leq K_g \leq 1$, one may have control on the $L^2$ norm of $\Delta u = \Delta u_0$.

In fact, such a control is related to the interesting problem of prescribing curvature on spheres, also known as the \emph{Nirenberg problem}, see \cite{moser1973nonlinear,kazdan1974curvature,kazdan1975existence,chang1987prescribing,chang1993nirenberg}. Using Moser-Trudinger type inequalities following Aubin, Chang, Gursky and Yang obtained upper bounds on $\int_{S^2} \Vert \nabla u \Vert^2 dA_{g_0}$ and $\Vert u \Vert_{C^0}$ depending just on the curvature $K_g$ for solutions $u$ of \eqref{curveq}, see \cite{chang1991perturbation,chang1993scalar}. Nevertheless, the constants within the estimates are not explicit and it seems complex to obtain explicit constants.

Inspired by Chang-Yang estimates in \cite{chang1991perturbation}, we obtain explicit constants using the following refinement of Onofri's inequality under the antipodal symmetry hypothesis due to Osgood, Philips and Sarnak.

\begin{lemma}{\cite[Corollary 2.2]{osgood1988extremals}}\label{onofrisarnak}
Let $u \in W^{1,2}(S^2)$ be a mean value zero function such that $u(-q)=u(q)$, for every $q \in S^2$. Then
$$\ln\left(\int_{S^2}e^{u} \ d\sigma_0\right) \leq \frac{1}{8} \int_{S^2} \Vert \nabla u \Vert^2 \ d\sigma_0,$$
where $d\sigma_0 = \frac{1}{4\pi}dA{g_0}$ is the probability measure induced by the round metric on $S^2$.
\end{lemma}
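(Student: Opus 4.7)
The plan is to derive the inequality from Aubin's refinement of the classical Onofri inequality on $S^2$, where the sharp constant improves by a factor of two under a ``center of mass'' condition on $e^u$.

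Recall that Onofri's inequality asserts that for every $u \in W^{1,2}(S^2)$,
$$\ln \int_{S^2} e^u \, d\sigma_0 \leq \frac{1}{4} \int_{S^2} \Vert \nabla u \Vert^2 \, d\sigma_0 + \bar u,$$
where $\bar u = \int_{S^2} u \, d\sigma_0$. Aubin's refinement states that if $u$ additionally satisfies
$$\int_{S^2} e^u \xi_j \, d\sigma_0 = 0, \qquad j = 1,2,3,$$
where $\xi_1, \xi_2, \xi_3$ denote the restrictions to $S^2 \subset \R^3$ of the three coordinate functions (equivalently, a basis of the $l=1$ spherical harmonics), then the sharp constant $1/4$ improves to $1/8$.

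The first step is to verify Aubin's hypothesis under our assumptions. Antipodal invariance $u(-q) = u(q)$ gives $e^{u(-q)} = e^{u(q)}$, while $\xi_j(-q) = -\xi_j(q)$ makes the product $e^u \xi_j$ antipodally anti-symmetric. Since $d\sigma_0$ is preserved by the antipodal involution $q \mapsto -q$, a change of variables shows
$$\int_{S^2} e^u \xi_j \, d\sigma_0 = 0, \qquad j=1,2,3.$$
Together with the mean-zero hypothesis $\bar u = 0$, Aubin's refinement applies directly and yields precisely the stated inequality.

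The principal obstacle is the proof of Aubin's refinement itself, which is the substantive content hiding behind the statement. The constant $1/4$ in Onofri is saturated along the non-compact three-parameter family of conformal factors of Möbius transformations of $S^2$, which degenerate by concentrating at a point as the Möbius parameter escapes to infinity. The condition $\int e^u \xi_j = 0$ pins the center of mass of the measure $e^u \, d\sigma_0$ at the origin of $\R^3$, which obstructs this concentration mechanism; a Möbius transformation moves the center of mass, and a careful second-variation computation along the resulting orbit halves the admissible constant. In \cite{osgood1988extremals} the result is obtained instead through Polyakov's variational formula for $\log \det \Delta$ applied to antipodally invariant metrics, which descend to metrics on $\R P^2$ and where the relevant spectral determinant controls the Onofri exponent.
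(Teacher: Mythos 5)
This lemma is not proved in the paper; it is imported verbatim from Osgood, Phillips and Sarnak (their Corollary~2.2), so there is no in-paper argument to compare against. Evaluating your proposal on its own terms: your statement of Onofri's inequality is correct, and your verification that antipodal invariance forces the center of mass of $e^u\,d\sigma_0$ to vanish is also correct. The gap is in the invocation of ``Aubin's refinement.'' Aubin's 1979 result is a perturbative improvement: under the center-of-mass condition $\int_{S^2} e^u\,\xi_j\,d\sigma_0 = 0$, for every $\alpha > 1/8$ there exists $C_\alpha$ such that
$$\ln\int_{S^2} e^u\,d\sigma_0 \leq \alpha\int_{S^2}\Vert\nabla u\Vert^2\,d\sigma_0 + \bar u + C_\alpha.$$
The clean endpoint $\alpha=1/8$ with $C_\alpha=0$ is not Aubin's theorem; it is the Chang--Yang conjecture, which remained open for decades after the Osgood--Phillips--Sarnak paper. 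So the chain ``antipodal $\Rightarrow$ center of mass $\Rightarrow$ Aubin $\Rightarrow 1/8$'' does not close the argument.

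What Osgood--Phillips--Sarnak actually prove is the clean $1/8$ under the strictly stronger hypothesis of antipodal invariance $u(-q)=u(q)$, not merely the center-of-mass condition that it implies. As you note in your final paragraph, their route is Polyakov's formula for $\log\det\Delta$: an antipodally invariant conformal factor descends to $\R P^2$, and the inequality is read off from the extremality of the round metric for the spectral determinant there, where the relevant conformal volume is halved. If you wish to argue via the M\"obius/center-of-mass mechanism you sketch, you would need the full Chang--Yang conjecture (or at least its restriction to antipodally symmetric functions), a substantially harder and much later result than Aubin's $\epsilon$-version, which defeats the purpose of giving a proof of a 1988 lemma.
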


From this, we obtain a control on the $L^2$ norm of the gradient of a solution $u$ of equation \eqref{curveq}.

\begin{lemma}\label{ineq:l2gradient}
Let $u$ be a smooth function on the two sphere with zero average. If $u$ is a solution to equation \eqref{curveq} with $K_g>0$ and $u(q) = u(-q)$ for all $q \in S^2$, then
\begin{align*}
\int_{S^2} \Vert \nabla u \Vert^2 \ d\sigma_0 &\leq \frac{1-K_{min}e^{-2}}{2K_{min}e^{-2}}\left(\ln\left(\frac{K_{max}}{1-K_{min}e^{-2}
}\right)\right) \\ &< \frac{1}{2K_{min}e^{-2}}\ln(K_{max}) + \frac{1}{2}.
\end{align*}
In particular, if $K_g$ is $\delta$-pinched, we have
$$\int_{S^2} \Vert \nabla u \Vert^2 \ d\sigma_0 < \frac{1}{2}\left(\frac{e}{\delta} + 1 \right).$$
\end{lemma}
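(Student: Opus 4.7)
Plan: My plan is to follow a Chang--Yang-style strategy, combining equation~\eqref{curveq} with Lemma~\ref{onofrisarnak} via a Jensen-type argument against a shifted probability density. First, multiply equation~\eqref{curveq} by $u$, integrate over $(S^2, d\sigma_0)$, apply integration by parts, and use $\int u \, d\sigma_0 = 0$ to obtain
\[
E := \int_{S^2}\|\nabla u\|^2 \, d\sigma_0 = \int_{S^2} u\,K_g e^{2u}\, d\sigma_0.
\]
Direct integration of \eqref{curveq} (equivalently, Gauss--Bonnet for $g = e^{2u}g_0$) gives $\int_{S^2} K_g e^{2u}\, d\sigma_0 = 1$, so $d\mu := K_g e^{2u}\, d\sigma_0$ is a probability measure.

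The main step is a Jensen argument against the shifted density $\tilde\rho := K_g e^{2u} - K_{min} e^{-2}$, which I claim is pointwise nonnegative (see the obstacle below). Writing $\rho_0 := K_{min}e^{-2}$, one has $\int_{S^2}\tilde\rho\, d\sigma_0 = 1-\rho_0$, and the zero-mean condition gives $E = \int u\,\tilde\rho\, d\sigma_0$. Normalizing $\tilde\rho/(1-\rho_0)$ as a probability density and applying Jensen's inequality to the convex function $x \mapsto e^{2x}$ yields
\[
\exp\!\left(\frac{2E}{1-\rho_0}\right) \leq \frac{1}{1-\rho_0}\int_{S^2} e^{2u}\tilde\rho\, d\sigma_0 \leq \frac{K_{max}}{1-\rho_0}\int_{S^2} e^{4u}\, d\sigma_0,
\]
after dropping the nonnegative term $\rho_0\int e^{2u}d\sigma_0$ and using $K_g \leq K_{max}$. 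Lemma~\ref{onofrisarnak} applied to the antipodally symmetric mean-zero function $4u$ gives $\int e^{4u}\, d\sigma_0 \leq e^{2E}$. Taking logarithms and rearranging produces $\frac{2\rho_0 E}{1-\rho_0} \leq \log\frac{K_{max}}{1-\rho_0}$, which is the first displayed bound once we substitute $\rho_0 = K_{min}e^{-2}$.

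For the strict second inequality, split the log as $\log(K_{max}/(1-\rho_0)) = \log K_{max} - \log(1-\rho_0)$ and invoke the elementary estimate $-(1-A)\log(1-A) \leq A$ on $A\in(0,1)$, applied with $A = \rho_0$, to bound the second contribution by $1/2$. For the $\delta$-pinched consequence, substitute $K_{min}\geq \delta K_{max}$ into the simplified bound and apply $\max_{x>0}(\log x)/x = 1/e$ (attained at $x=e$) to conclude $\frac{1}{2}(e/\delta + 1)$. The hardest step is justifying the pointwise bound $K_g e^{2u}\geq K_{min}e^{-2}$, equivalently $u \geq -1$ at the worst point. I expect this to come from the antipodal symmetry combined with Lemma~\ref{onofrisarnak} applied to both $u$ and $-u$, or from a maximum-principle argument on $\Delta u = 1 - K_g e^{2u}$ at a minimum of $u$; failing that, the globally-valid tangent-line inequality $e^{2u}\geq e^{-2}(3+2u)$ (which holds for all $u\in\mathbb{R}$ by convexity of the exponential) can be substituted at the cost of slightly weaker numerical constants.
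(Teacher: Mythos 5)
Your overall strategy coincides with the paper's: multiply the Gauss equation by $u$, integrate by parts, rewrite the right-hand side as an integral of $2u$ against a shifted probability density, apply Jensen, and close the loop with Lemma~\ref{onofrisarnak} applied to $4u$; the passage to the second displayed bound via $-(1-A)\ln(1-A)\leq A$ and the $\delta$-pinched conclusion via $\max_{x>0}(\ln x)/x = 1/e$ are also the same. The one formulation difference is minor: you shift by $\rho_0 = K_{\min}e^{-2}$ from the outset, while the paper shifts by $m = \min_{S^2}K_g e^{2u}$ (so nonnegativity of the density is automatic), derives $\int\Vert\nabla u\Vert^2\,d\sigma_0 \leq \tfrac{1-m}{2m}\ln\tfrac{K_{\max}}{1-m}$, and only afterwards substitutes $m \geq K_{\min}e^{-2}$ by a monotonicity argument. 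Your version is arguably cleaner, \emph{provided} the pointwise bound holds.

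That pointwise bound, $u \geq -1$ (equivalently $K_g e^{2u}\geq K_{\min}e^{-2}$), is a genuine gap in your proposal, and none of your three suggested routes actually delivers it. The maximum-principle route runs the wrong way: at a minimum $p_0$ of $u$ one has $\Delta u(p_0)\geq 0$, so $K_g(p_0)e^{2u(p_0)}\leq 1$, which yields an \emph{upper} bound $u_{\min}\leq -\tfrac12\ln K_g(p_0)$, not the lower bound you need. Onofri--Sarnak applied to $\pm u$ controls integral quantities such as $\int e^{\pm u}\,d\sigma_0$, not pointwise values. The tangent-line substitute would alter the numerical constants and hence prove a different statement. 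What the paper actually uses is a Green's function representation (Lemma~\ref{minofu}): writing
\[
u(p)=\bar u-\int_{S^2}G(p,q)\,\Delta u(q)\,dA_{g_0}(q)=\bar u+\int_{S^2}G(p,q)\,K_g e^{2u}\,dA_{g_0}(q),
\]
using $\Delta u = 1-K_ge^{2u}$ and $\int G(p,\cdot)\,dA_{g_0}=0$, then bounding $G(p,q)=-\tfrac{1}{2\pi}\ln\Vert p-q\Vert_E+\tfrac{1}{4\pi}(2\ln 2-1)\geq -\tfrac{1}{4\pi}$ since $\Vert p-q\Vert_E\leq 2$, and finally concluding $u(p)\geq \bar u - \tfrac{1}{4\pi}\int K_ge^{2u}\,dA_{g_0}=\bar u - 1$ by Gauss--Bonnet. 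This step does not use the antipodal symmetry (that enters only through Lemma~\ref{onofrisarnak}), and without it, or an equivalent ingredient, your argument is incomplete.
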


We recall that in \cite{moser1973nonlinear}, Moser proved that if a positive function $K$ is antipodally symmetric, i.e., $K(x)=K(-x)$ on $S^2$, then equation \eqref{curveq} admits a solution $u$ with the same symmetry.

Since we shall use the Green's function for the Laplacian on the sphere in our estimates, we recall its properties.

\begin{theorem}[{\cite[Theorem 4.13]{aubin1998some}}]\label{greenfunc}
Let $M$ be a $n$-dimensional closed Riemannian manifold. There exists a smooth function $G$ defined on $M \times M$ minus the diagonal with the following properties:
\begin{enumerate}
\item For every $\varphi \in C^2(M)$,
$$\varphi(p) = \frac{1}{\mathrm{Vol}(M)}\int_{q \in M} \varphi(q) \ d\mathrm{Vol}(q) - \int_{q\in M} G(p,q) \Delta \varphi(q) \ d\mathrm{Vol}(q).$$
\item There exists a constant $k$ such that, for every $p\neq q$, 
\begin{align*}
\vert G(p,q) \vert &\leq k(1+\vert \ln d(p,q) \vert), \ \text{for} \ n=2 \\ \vert G(p,q) \vert &\leq k d(p,q)^{2-n}, \ \text{for} \ n>2, \\ \Vert \nabla_q G(p,q) \Vert &\leq kd(p,q)^{1-n}, \\ \Vert \nabla^2_q G(p,q)\Vert &\leq kd(p,q)^{-n}.
\end{align*}
\item There exists a constant $A$ such that $G(p,q) \geq A$. Since the Green's function is defined up to a constant, we can choose the Green's function everywhere positive.
\item $\int_{q \in M} G(p,q) \ d\mathrm{Vol}(q)$ is constant, and hence, we can choose the Green's function so that its integral equals zero.
\item $G(p,q) = G(q,p)$ for $p \neq q$.
\end{enumerate}
\end{theorem}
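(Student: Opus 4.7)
The plan is to construct $G$ via the classical parametrix method and then verify the five listed properties individually. The construction has two stages: first a local model built from the Euclidean fundamental solution of the Laplacian, and then a global smooth correction obtained from the Fredholm alternative. The main obstacle is the correction step, where one needs the mean-value compatibility and elliptic regularity to upgrade an approximate inverse into an honest Green's function.

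For the parametrix, work in a geodesic normal coordinate chart around $p$ of radius smaller than the injectivity radius of $M$, and let $\Gamma(x) = \frac{1}{(n-2)\omega_{n-1}}|x|^{2-n}$ for $n>2$ (respectively $-\frac{1}{2\pi}\ln|x|$ for $n=2$) denote the Euclidean fundamental solution. Using a smooth cutoff $\chi$ equal to one near the diagonal of $M \times M$ and compactly supported in a small neighborhood of it, define $H(p,q) = \chi(d(p,q))\,\Gamma(\exp_p^{-1}(q))$, extended by zero off the cutoff support. Computing in normal coordinates and comparing the Riemannian and Euclidean Laplacians, one obtains
\begin{equation*}
\Delta_q H(p,q) = \delta_p(q) - F(p,q),
\end{equation*}
where $F$ is bounded and has at worst a mild integrable singularity on the diagonal arising from the metric defect $g^{ij} - \delta^{ij} = O(d(p,q)^2)$.

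Next, correct $H$ to obtain $G$. I want $\Delta_q G(p,q) = \delta_p(q) - V^{-1}$ with $V = \mathrm{Vol}(M)$, so writing $G = H + u$ reduces the problem to $\Delta_q u(p,q) = F(p,q) - V^{-1}$. Integrating the equation for $H$ over $M$ shows $\int_M F(p,q)\,dV(q) = 1$ for every $p$, so the right-hand side has zero mean in $q$, which is precisely the solvability condition for $\Delta$ on a closed manifold. The Fredholm alternative produces a unique zero-mean solution $u(p,\cdot)$; iterating the parametrix finitely many times improves the regularity of $F$ until standard Schauder estimates make $u$ jointly smooth off the diagonal. The singular estimates in (2) then follow because the singularity is entirely carried by $H$, while $u$ is smooth, and direct differentiation of $\Gamma$ in normal coordinates yields the stated bounds on $|G|$, $\|\nabla_q G\|$ and $\|\nabla^2_q G\|$.

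Finally, I would verify the remaining properties from the defining equation. For (1), apply Green's second identity to $\varphi$ and $G(p,\cdot)$ on $M \setminus B_\epsilon(p)$ and let $\epsilon \to 0$; the singular part of $G$ reproduces $\varphi(p)$ while the smooth remainder yields the averaged term. For (4), observe that $p \mapsto \int_M G(p,q)\,dV(q)$ has Laplacian equal to $\int_M(\delta_q(p)-V^{-1})\,dV(q) = 0$, hence is constant on the closed manifold $M$, and can therefore be normalized to zero. For (3), continuity of $G$ off the diagonal together with $G \to +\infty$ near it shows $G$ is bounded below, so adding a constant makes it positive. For (5), apply Green's identity to $G(p_1,\cdot)$ and $G(p_2,\cdot)$ on $M$ with small geodesic balls around both poles removed and let the radii shrink; the volume terms cancel and the matching singular boundary contributions give $G(p_1,p_2) = G(p_2,p_1)$. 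The hardest and most delicate step is the correction stage, where the low regularity of $F$ forces one to iterate the parametrix before Schauder theory applies to produce a smooth $u$.
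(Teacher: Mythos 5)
This statement is quoted verbatim from Aubin (\cite[Theorem 4.13]{aubin1998some}); the paper gives no proof of it, so the only meaningful comparison is with the standard argument in that reference. Your sketch is essentially that argument: a parametrix built from the Euclidean fundamental solution in normal coordinates, the computation $\Delta_q H=\delta_p-F$ with $F$ carrying an integrable singularity of order $d(p,q)^{2-n}$ coming from the metric defect, iteration of the parametrix to regularize the error kernel, and a final correction solving a Poisson equation whose solvability is guaranteed by the zero-mean condition. The verification of (1) by Green's second identity on $M\setminus B_\epsilon(p)$ and of (2) and (3) from the explicit singular part is also correct and standard.

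There is, however, one genuine logical gap: your arguments for (4) and (5) are circular. To compute the Laplacian of $p\mapsto\int_M G(p,q)\,dV(q)$ you differentiate under the integral using $\Delta_p G(p,q)=\delta_q(p)-V^{-1}$, but the construction only gives $\Delta_q G(p,q)=\delta_p(q)-V^{-1}$; passing the equation to the first variable is exactly the symmetry (5). Conversely, in your proof of (5) the ``volume terms'' are $-V^{-1}\bigl(\int_M G(p_1,q)\,dV(q)-\int_M G(p_2,q)\,dV(q)\bigr)$, and these cancel only if (4) already holds; indeed the Green's identity computation shows precisely that (4) and (5) are equivalent, not that either holds. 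Note also that your normalization (zero-mean correction $u(p,\cdot)$) gives $\int_q G(p,q)\,dV(q)=\int_q H(p,q)\,dV(q)$, which is genuinely $p$-dependent in general, so (4) is not automatic from the construction. The standard repair is to obtain symmetry first, independently of (4): realize $G$ (up to the explicit rank-one modification by constants) as the Schwartz kernel of the inverse of $\Delta$ on the orthogonal complement of the constants in $L^2(M)$; self-adjointness of $\Delta$ forces $G(p,q)=G(q,p)$, and (4) then follows by your Laplacian computation. With that reordering the proof is complete and agrees with the cited source.
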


In the case of the round two-sphere, one can explicitly compute the Green's function for the Laplacian
$$G(p,q) = -\frac{1}{2\pi}\ln (\Vert p - q \Vert_E) + C,$$
for $p,q \in S^2 \times S^2 \subset \R^3 \times \R^3$, where $\Vert \cdot \Vert_E$ denotes the Euclidean norm in $\R^3$, see e.g. \cite[Appendix A.1.]{beltran2019discrete}. We choose $C =\frac{1}{4\pi}(2\ln 2 - 1)$, yielding $\int_{q \in M} G(p,q) \ d\mathrm{Vol}(q) = 0$. 

Indeed, for $p= (0,0,1) \in S^2$, we can write $\Vert p - q \Vert_E = 2\sin(\theta/2)$ in spherical coordinates, where $\theta \in [0,\pi]$ is the polar angle between the radial line and the $z$-axis. Therefore, we compute
\begin{align*}
\int_{q \in S^2} (G(p,q) - C) \ dA_{g_0}(q) &= \int_0^{2\pi}\int_{0}^\pi -\frac{1}{2\pi}\ln\left(2\sin\left(\frac{\theta}{2}\right)\right)\sin \theta \ d\theta d\phi \\ &= -\int_0^\pi \ln\left(2\sin\left(\frac{\theta}{2}\right)\right)\sin \theta \ d\theta \\ &= -4 \int_0^{\pi/2} \ln(2\sin(\gamma))\sin\gamma \cos \gamma \ d\gamma \\ &= -4 \int_0^1 \ln(2v) v \ dv \\ &= -4\left(\int_0^1 \ln(2)v \ dv + \int_0^1 \ln(v) v\ dv\right) \\ &= -2 \ln 2 + 1,
\end{align*}
where we substitute $\theta = 2\gamma$ and $\sin\gamma = v$.

\begin{lemma}\label{minofu}
For a solution $u \in C^2(S^2)$ to equation \eqref{curveq}, the following upper bound holds
$$u(p) \geq \bar{u} - 1 \quad \text{for all} \ p \in S^2,$$
where $\bar{u} = \int u \ d\sigma_0$ denotes the average of $u$.
\end{lemma}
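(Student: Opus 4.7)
The plan is to combine the Green's function representation formula of Theorem \ref{greenfunc} with the prescribed curvature equation \eqref{curveq}. Applying point (1) of that theorem to $\varphi = u$ on the round sphere (where $\mathrm{Vol}(S^2,g_0) = 4\pi$) yields
$$u(p) = \bar{u} - \int_{S^2} G(p,q)\,\Delta u(q)\, dA_{g_0}(q).$$
Substituting $\Delta u = 1 - K_g e^{2u}$ from \eqref{curveq} and using the chosen normalization $\int_{S^2} G(p,q)\, dA_{g_0}(q) = 0$, the constant term drops and one is left with
$$u(p) = \bar{u} + \int_{S^2} G(p,q)\, K_g(q)\, e^{2u(q)}\, dA_{g_0}(q).$$

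Next, I would extract an explicit lower bound for the Green's function from the formula
$$G(p,q) = -\frac{1}{2\pi}\ln\|p-q\|_E + \frac{2\ln 2 - 1}{4\pi}$$
recalled just before the statement. Since $\|p-q\|_E \leq 2$, the first term is bounded below by $-\frac{\ln 2}{2\pi}$, and a direct computation gives the sharp bound $G(p,q) \geq -\frac{1}{4\pi}$.

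Because $K_g e^{2u} > 0$ by hypothesis, the previous identity together with this lower bound yields
$$u(p) \geq \bar{u} - \frac{1}{4\pi}\int_{S^2} K_g(q)\, e^{2u(q)}\, dA_{g_0}(q).$$
The integral on the right is exactly the total curvature of $g = e^{2u}g_0$, since $dA_g = e^{2u}\, dA_{g_0}$. By the Gauss--Bonnet theorem,
$$\int_{S^2} K_g\, e^{2u}\, dA_{g_0} = \int_{S^2} K_g\, dA_g = 2\pi \chi(S^2) = 4\pi,$$
so one concludes $u(p) \geq \bar{u} - 1$, as desired.

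The main obstacle is essentially bookkeeping: one must keep track of the chosen normalization for $G$ (which uses that $\int G = 0$ rather than $G \geq 0$, since both normalizations cannot hold simultaneously) and then exploit the explicit constant on the round sphere to turn the positivity of $K_g e^{2u}$ into a pointwise bound. The fact that the total curvature is exactly $4\pi$ is what makes the constant on the right-hand side clean.
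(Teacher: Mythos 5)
Your proof is correct and follows essentially the same approach as the paper: Green's representation for $u$, substitution of the Gauss equation, the normalization $\int G = 0$, the pointwise lower bound $G \geq -\tfrac{1}{4\pi}$ from $\|p-q\|_E \leq 2$, positivity of $K_g e^{2u}$, and Gauss--Bonnet. Your explicit remark that the two normalizations in Theorem \ref{greenfunc} (positivity vs.\ zero integral) cannot both be chosen is a useful clarification but does not change the argument.
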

\begin{proof}
From Theorem \ref{greenfunc}, we can write
$$u(p) = \bar{u} - \int_{S^2} G(p,q)\Delta u(q) \ dA_{g_0}(q).$$
Then, using that $u$ solves equation \eqref{curveq} and $G(p,q) = -\frac{1}{2\pi}\ln(\Vert p-q \Vert_E) + \frac{1}{4\pi}(2\ln 2 -1)$,
\begin{align*}
u(p) &= \bar{u} - \int_{q\in S^2} G(p,q) (1-K_ge^{2u} )\ dA_{g_0}(q) \\ &= \bar{u} + \int_{q\in S^2} G(p,q)K_ge^{2u} \ dA_{g_0}(q) \\ &\geq \bar{u} + \left(-\frac{1}{2\pi}\ln 2 + \frac{1}{4\pi}(2\ln 2 -1)\right) \int_{S^2} K_ge^{2u} \ dA_{g_0} \\ &= \bar{u} -1,
\end{align*}
since $\int_{S^2} K_ge^{2u} \ dA_{g_0} = \int_{S^2} K_g \ dA_g = 4\pi$ from Gauss-Bonnet Theorem.
\end{proof}

Now we are ready to prove Lemma \ref{ineq:l2gradient}.

\begin{proof}[Proof of Lemma \ref{ineq:l2gradient}]
If $u$ is constant equal zero, the result follows easily. We assume $u$ nonconstant. From $K_ge^{2u} = 1-\Delta_{g_0}u$, we obtain
\begin{equation}
 2 \int_{S^2} \Vert \nabla u \Vert^2 \ d\sigma_0 + 2 \int_{S^2} u \ d\sigma_0 = 2\int_{S^2}K_g e^{2u}u \ d\sigma_0
\end{equation}
multiplying both sides by $2u$ and integrating over $S^2$. Note that $\int_{S^2} K_ge^{2u} \ d\sigma_0 = \frac{1}{4\pi} \int_{S^2} K_g dA_g = 1$. Then, we can use Jensen's inequality, the fact that $u$ has zero average and Lemma \ref{onofrisarnak} to estimate
\begin{align*}
2 \int_{S^2} \Vert \nabla u \Vert^2 \ d\sigma_0 &=  2\int_{S^2}K_g e^{2u}u \ d\sigma_0 \\ &= 2\int_{S^2}(K_g e^{2u}-m)u \ d\sigma_0 \\ &= (1-m)\int_{S^2}\left(\frac{K_ge^{2u}-m}{1-m}\right)2u \ d\sigma_0 \\ &\leq (1-m) \ln \left( \int_{S^2} \left(\frac{K_ge^{2u}-m}{1-m}\right) e^{2u} \ d\sigma_0\right) \\ &\leq (1-m)\left( \ln(K_{max}) - \ln(1-m)+ \ln\left(\int_{S^2}e^{4u} \ d\sigma_0\right)\right) \\ &\leq (1-m)\left(\ln(K_{max}) - \ln(1-m)+ 2 \int_{S^2} \Vert \nabla u \Vert^2 \ d\sigma_0\right),
\end{align*}
where $m = \min_{S^2} K_ge^{2u}>0$ and $m<1$ follows from Gauss-Bonnet. Thus, we get
\begin{equation}\label{ineqgrad}
\int_{S^2} \Vert \nabla u \Vert^2 \ d\sigma_0 \leq \frac{1-m}{2m}\left(\ln (K_{max}) - \ln(1-m)\right)
\end{equation}

By Lemma \ref{minofu}, we have $\min_{S^2} u \geq -1$ and then, $m \geq K_g e^{-2} \geq K_{min}e^{-2}>0$. Therefore,
$$\frac{1-m}{2m} \leq \frac{1-K_{min}e^{-2}}{2K_{min}e^{-2}} \quad \text{and} \quad -\frac{1-m}{2m}\ln(1-m) \leq -\frac{1-K_{min}e^{-2}}{2K_{min}e^{-2}}\ln(1-K_{min}e^{-2}) < \frac{1}{2}$$
hold since the real functions $x \mapsto \frac{1-x}{2x}$ and $x\mapsto -\frac{1-x}{2x} \ln(1-x)$ are decreasing for $x>0$, and $\lim_{x\to {0_+}} -\frac{1-x}{2x}\ln(1-x) = 1/2$. Putting these together with \eqref{ineqgrad}, we obtain
\begin{align*}
\int_{S^2} \Vert \nabla u \Vert^2 \ d\sigma_0 &\leq \frac{1-K_{min}e^{-2}}{2K_{min}e^{-2}}\left(\ln\left(\frac{K_{max}}{1-K_{min}e^{-2}
}\right)\right) \\ &< \frac{1}{2K_{min}e^{-2}}\ln(K_{max}) + \frac{1}{2},
\end{align*}
as desired. If $K_g$ is $\delta$-pinched, we have $K_{min} \geq \delta K_{max}$. In this case, we get
\begin{align*}
\int_{S^2} \Vert \nabla u \Vert^2 \ d\sigma_0 &< \frac{1}{2K_{min}e^{-2}}\ln(K_{max}) + \frac{1}{2} \\ &\leq \frac{1}{2}\left(\frac{e^2}{\delta K_{max}}\ln(K_{max}) + 1\right) \\ &\leq \frac{1}{2}\left(\frac{e^2}{\delta e} + 1 \right) \\ &\leq \frac{1}{2}\left(\frac{e}{\delta} + 1 \right),
\end{align*}
since the real function $x \mapsto \frac{\ln x}{x}$ attains its global maximum in $x=e$.
\end{proof}

Finally, we obtain our final estimate.

\begin{theorem}
Let $g$ be a $\delta$-pinched Riemannian metric which is antipodally symmetric, i.e., $a^*g = g$ for the antipodal map $a(q)=-q, \ q \in S^2$. Then $g$ is fiberwise $\beta$-balanced with
$$\beta > \exp\left({-2\left(\sqrt{7\left(\frac{\pi^2}{3}+2\right)}\sqrt{\left(\frac{e^{(e/\delta+1)}}{\delta^2}-1\right)}\right)}\right).$$
\end{theorem}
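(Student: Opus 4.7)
The plan is to reduce to bounding the oscillation of a conformal factor, and then to control it via the curvature equation \eqref{curveq} using the sharpened Onofri inequality of Lemma \ref{onofrisarnak} that becomes available under antipodal symmetry. Since $g$ is antipodally symmetric, its curvature $K_g$ is antipodally symmetric, and Moser's construction cited just before Theorem \ref{greenfunc} (together with equivariant uniformization) lets us replace $g$ by an isometric conformal representative $e^{2u}g_0$ with $u \in C^\infty(S^2)$ satisfying $u(-q)=u(q)$. By \eqref{oscilation}, it then suffices to prove
$$\mathrm{osc}(u) < \sqrt{7\left(\frac{\pi^2}{3}+2\right)} \sqrt{\frac{e^{e/\delta+1}}{\delta^2} - 1}.$$

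Applying the chain \eqref{inequalities} with the explicit constants \eqref{ineq:csupper}--\eqref{ineq:cpupper} reduces the task to estimating $\|\Delta u\|_{L^2}$ (noting $\Delta u_0 = \Delta u$). From \eqref{curveq} we have $\Delta u = 1 - K_g e^{2u}$, so expanding the square and using Gauss--Bonnet ($\int_{S^2} K_g e^{2u}\,dA_{g_0} = 4\pi$) to cancel the cross term yields
$$\|\Delta u\|_{L^2}^2 = \int_{S^2} K_g^{\,2}\, e^{4u}\,dA_{g_0} - 4\pi.$$

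The heart of the argument is to bound $\int K_g^{\,2} e^{4u}\,dA_{g_0}$, which I handle by writing $u = u_0 + \bar u$ and treating each piece. Applying Lemma \ref{onofrisarnak} to the antipodally symmetric mean-zero function $4u_0$, and then invoking the gradient bound of Lemma \ref{ineq:l2gradient}, gives
$$\int_{S^2} e^{4u_0}\,d\sigma_0 \leq \exp\!\left(2\int_{S^2} \|\nabla u\|^2 \,d\sigma_0\right) < e^{e/\delta+1}.$$
For the average, Jensen's inequality gives $\int_{S^2} e^{2u_0}\,d\sigma_0 \geq 1$, and combining with Gauss--Bonnet yields $4\pi = \int_{S^2} K_g e^{2u}\,dA_{g_0} \geq 4\pi K_{min}\, e^{2\bar u}$, whence $e^{4\bar u} \leq K_{min}^{-2}$. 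Using $K_g \leq K_{max}$ and the pinching $K_{max}^2/K_{min}^2 \leq 1/\delta^2$ then produces
$$\|\Delta u\|_{L^2}^2 < 4\pi\!\left(\frac{e^{e/\delta+1}}{\delta^2}-1\right).$$

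Plugging this back into the chain with $C_S \leq \tfrac{1}{2}\sqrt{(\pi^2/3+2)/\pi}$ and $C_P \leq \sqrt{7}/2$ recovers exactly the displayed estimate on $\mathrm{osc}(u)$, and \eqref{oscilation} converts it into the claimed lower bound on $\beta$. The subtlest step I expect is the initial reduction: namely, justifying that the isometric conformal representative obtained from Moser's construction preserves the fiberwise balancing of $g$, which ultimately depends on the antipode-commuting conformal diffeomorphisms of $(S^2,g_0)$ being $g_0$-isometries. All other steps are direct applications of the results already established in the paper, and it is precisely the antipodal symmetry that unlocks the sharp Onofri constant $1/8$ in Lemma \ref{onofrisarnak}, without which the final exponential would be markedly weaker.
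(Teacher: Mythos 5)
Your proof follows the same route as the paper's: reduce via \eqref{oscilation} and the chain \eqref{inequalities} to bounding $\Vert \Delta u \Vert_{L^2}$, expand $\Vert \Delta u \Vert_{L^2}^2 = \int_{S^2} K_g^2 e^{4u}\,dA_{g_0} - 4\pi$ from the Gauss equation and Gauss--Bonnet, then control the remaining integral with Lemma \ref{onofrisarnak} applied to $4u_0$ together with Lemma \ref{ineq:l2gradient}, and finish using \eqref{ineq:csupper} and \eqref{ineq:cpupper}. The only real divergence is a bookkeeping choice: the paper rescales $g$ so that $\bar u = 0$ and then proves $K_{max}\leq 1/\delta$ via Gauss--Bonnet and Jensen, whereas you keep $\bar u$, derive $e^{2\bar u}\leq K_{min}^{-1}$, and use the scale-invariant ratio $(K_{max}/K_{min})^2\leq \delta^{-2}$; both produce the same $4\pi\bigl(e^{e/\delta+1}/\delta^2-1\bigr)$. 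If you do keep $\bar u \neq 0$, note that Lemma \ref{ineq:l2gradient} is stated for zero-average solutions of \eqref{curveq}, so you should add a line observing that $u_0$ solves the Gauss equation for the rescaled metric $e^{-2\bar u}g$, which has the same pinching ratio and the same gradient as $u$. Finally, the concern you flag about passing from $g$ to its conformal representative is genuine and is shared implicitly by the paper's own proof, but your proposed fix does not quite land: the uniformizing map is a biholomorphism between the complex structure induced by $g$ and that of $g_0$, not a conformal \emph{automorphism} of $(S^2,g_0)$, so the fact that antipode-commuting Möbius transformations lie in $SO(3)$ does not by itself force that map to be a $g_0$-isometry.
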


\begin{proof}
Since $g$ is $\delta$-pinched, we have $K_g>0$ and $K_{min}\geq \delta K_{max}$. By the Uniformization Theorem, $g$ is isometric to a conformal metric $e^{2u}g_0$, for some smooth function $u\colon S^2 \to \R$. Since $g$ is antipodally symmetric, we can assume $u(q)=u(-q)$ and without loss, we scale $g$ such that $\bar{u} = 0$. We write $K_{e^{2u}g_0} = K_g$, omitting the uniformization isometry.

In this case, $u$ solves the Gauss equation \eqref{curveq}. In particular, we have
$$\Delta u = 1 - K_ge^{2u}.$$
Squaring both sides and integrating over $S^2$, we obtain
\begin{align}\label{ineq:l2lapl}
\Vert \Delta u \Vert_{L^2}^2 = \int_{S^2} \vert \Delta u \vert^2 \ dA_{g_0} &= 4\pi -2\int_{S^2} K_ge^{2u} \ dA_{g_0} + \int_{S^2} K_g^2e^{4u} \ dA_{g_0} \nonumber \\ &= -4\pi + \int_{S^2} K_g^2e^{4u} \ dA_{g_0} \nonumber \\ &\leq -4\pi + K_{max}^2\int_{S^2}e^{4u} \ dA_{g_0},
\end{align}
where we use again $\int_{S^2} K_ge^{2u} \ dA_{g_0} = 4\pi$. From Lemma \ref{onofrisarnak}, we get
$$\int_{S^2} e^{4u} \ dA_{g_0} \leq 4\pi e^{2\int_{S^2}\Vert \nabla u \Vert^2 \ d\sigma_0},$$
and using Lemma \ref{ineq:l2gradient},
\begin{align*}
e^{2\int_{S^2}\Vert \nabla u \Vert^2 \ d\sigma_0} &< e^{2\left(1/2\left(e/\delta+1\right)\right)} \\ &= e^{\left(e/\delta+1\right)}.
\end{align*}
Incorporating these with \eqref{ineq:l2lapl}, we get
$$\Vert \Delta u \Vert_{L^2}^2 < 4\pi \left(K^2_{max}e^{(e/\delta+1)} -1 \right)\leq 4\pi \left(\frac{e^{(e/\delta+1)}}{\delta^2}-1\right),$$
where we use the pinching condition, Gauss-Bonnet Theorem and Jensen's inequality to obtain $K_{max} \leq 1/\delta K_{min} \leq 1/\delta \frac{4\pi}{\mathrm{Area}(S^2,g)} \leq 1/\delta$. Now combining the latter inequality with \eqref{inequalities}, we obtain
\begin{align}\label{oscilation2}
\mathrm{osc}(u) &\leq 2C_SC_P \Vert \Delta u_0 \Vert_{L^2} \nonumber \\ &= 2C_SC_P \Vert \Delta u  \Vert_{L^2} \nonumber \\ &< 2C_SC_P 2\sqrt{\pi} \sqrt{\left(\frac{e^{(e/\delta+1)}}{\delta^2}-1\right)} \nonumber \\ &< \sqrt{7\left(\frac{\pi^2}{3}+2\right)}\sqrt{\left(\frac{e^{(e/\delta+1)}}{\delta^2}-1\right)},
\end{align}
using the upper bounds \eqref{ineq:csupper} and \eqref{ineq:cpupper}. 

We have seen in \eqref{oscilation} that $g$ is fiberwise $\beta$-balanced if, and only if $\mathrm{osc}(u)\leq -1/2\ln \beta$. From \eqref{oscilation2}, we get that $g$ is fiberwise $\beta$-balanced, where
$$\beta > \exp\left({-2\left(\sqrt{7\left(\frac{\pi^2}{3}+2\right)}\sqrt{\left(\frac{e^{(e/\delta+1)}}{\delta^2}-1\right)}\right)}\right).$$
\end{proof}

As a consequence, one derives from Theorem \ref{thm:sysarea}, Corollary \ref{cor:lambda} and Corollary \ref{cor:diam1}, non-sharp systolic inequalities for antipodally symmetric metrics which are $\delta$-pinched.

\appendix
\section{Appendix: Estimating Constants}\label{sec:app}
In this section, we estimate the constants $C_S$ and $C_P$ appearing in $\eqref{inequalities}$. For $C_S$, we estimate a constant for the Sobolev embedding $H^2(S^2) \hookrightarrow C^0(S^2)$ using the Laplace-Fourier series, i.e., the spherical harmonics expansion.

Recall that the Laplacian's spherical harmonics $Y_{lm}$ are the restriction of the harmonic homogeneous polynomials of degree $l$ to $S^2$. These are the eigenfunctions for the Laplacian with respect to the round sphere:
$$\Delta Y_{lm} = -l(l+1)Y_{lm}, \quad l \in \mathbb{Z}_{\geq 0}, \ -l \leq m \leq l.$$
Moreover, the set $\{Y_{lm}\}$, $l \in \mathbb{Z}_{\geq 0}$ and $-l \leq m \leq l$, form a complete set of orthogonal functions in the Hilbert space $H^2(S^2) = W^{2,2}(S^2)$ consisting of the completion of $C^2(S^2)$ with respect to the norm
$$\Vert f \Vert_{H^2} = \left( \Vert f \Vert_{L^2}^2 + \Vert \nabla f \Vert_{L^2}^2 + \Vert \Delta f \Vert_{L^2}^2\right)^{1/2}.$$
Further, given $u \in H^2(S^2)$, the Laplace-Fourier series
$$u = \sum_{l=0}^\infty \sum_{m=-l}^l a_{lm}Y_{lm},$$
where $a_{lm} = \int_{S^2} uY_{lm} \ dA_{g_0}$, converges uniformly to $u$. We recommend \cite{stein1971introduction,garret} for these and further details on spherical harmonics. We adopt the normalization $\Vert Y_{lm} \Vert_{L^2} = 1$. In this case, it follows that
$$\Vert Y_{lm} \Vert_{C^0} = \sqrt{\frac{2l+1}{4\pi}}, \quad l \in \mathbb{Z}_{\geq 0}, \ -l \leq m \leq l.$$

\begin{lemma}\label{lemma:cs}
Let $u \in H^2(S^2) = W^{2,2}(S^2)$, then
$$\Vert u \Vert_{C^0} \leq \frac{\Vert u \Vert_{H^2}}{\sqrt{4\pi}} \left(\sum_{l=0}^\infty \frac{(2l+1)^2}{1+l(l+1)+l^2(l+1)^2}\right)^{1/2} < \frac{1}{2}\sqrt{\frac{1}{\pi}\left(\frac{\pi^2}{3}+2\right)}\Vert u \Vert_{H^2}.$$
\end{lemma}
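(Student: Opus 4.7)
\textbf{Proof Proposal for Lemma \ref{lemma:cs}.}

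The plan is to pass to the spherical harmonic decomposition and then bound $|u(p)|$ by two applications of Cauchy--Schwarz, matching the weights to the $H^2$--norm. Write $u=\sum_{l\ge 0}\sum_{m=-l}^l a_{lm}Y_{lm}$, the series converging uniformly. Since $\Delta Y_{lm}=-l(l+1)Y_{lm}$ and $\{Y_{lm}\}$ is an $L^2$--orthonormal basis, Parseval together with $\int|\nabla u|^2=-\int u\,\Delta u$ yields
\begin{equation*}
\Vert u\Vert_{H^2}^2=\sum_{l=0}^\infty w_l\sum_{m=-l}^l |a_{lm}|^2,\qquad w_l:=1+l(l+1)+l^2(l+1)^2.
\end{equation*}

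For the pointwise estimate, I would use the triangle inequality and the stated $C^0$--norm $\Vert Y_{lm}\Vert_{C^0}=\sqrt{(2l+1)/(4\pi)}$, followed by Cauchy--Schwarz on the $(2l+1)$--term sum over $m$:
\begin{equation*}
|u(p)|\le \sum_{l=0}^\infty\sqrt{\frac{2l+1}{4\pi}}\sum_{m=-l}^l |a_{lm}|\le \sum_{l=0}^\infty\frac{2l+1}{\sqrt{4\pi}}\Bigl(\sum_{m=-l}^l|a_{lm}|^2\Bigr)^{1/2}.
\end{equation*}
Now apply weighted Cauchy--Schwarz in $l$, pairing the factor $\sqrt{w_l\sum_m|a_{lm}|^2}$ with $(2l+1)/\sqrt{4\pi w_l}$; the first factor squares and sums to $\Vert u\Vert_{H^2}^2$, and this produces exactly the series $\sum_l (2l+1)^2/w_l$ appearing in the first claimed inequality.

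It remains to bound $\sum_l (2l+1)^2/w_l<\pi^2/3+2$. The $l=0$ term is $1$. For $l\ge 1$ I would use $w_l>l^2(l+1)^2$, giving
\begin{equation*}
\frac{(2l+1)^2}{w_l}<\frac{(2l+1)^2}{l^2(l+1)^2}=\Bigl(\tfrac1l+\tfrac1{l+1}\Bigr)^2=\frac{1}{l^2}+\frac{2}{l(l+1)}+\frac{1}{(l+1)^2}.
\end{equation*}
Summing over $l\ge 1$ gives $\pi^2/6+2+(\pi^2/6-1)=\pi^2/3+1$ (the middle sum telescopes via $1/(l(l+1))=1/l-1/(l+1)$), so adding the $l=0$ contribution yields the strict bound $\pi^2/3+2$ and the second inequality follows. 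The main (minor) obstacle is simply the bookkeeping of making the two Cauchy--Schwarz steps introduce precisely the weights $w_l$; once that is aligned, the numerical estimate is a clean telescoping computation.
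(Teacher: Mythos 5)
Your proof is correct and follows essentially the same strategy as the paper: spherical harmonic expansion, a Cauchy--Schwarz step with the weights $w_l=1+l(l+1)+l^2(l+1)^2$ matched to $\Vert u\Vert_{H^2}^2$, and the identical telescoping estimate $\sum_l(2l+1)^2/w_l<\pi^2/3+2$. The only cosmetic difference is that you apply Cauchy--Schwarz in two stages (first over $m$ with $2l+1$ terms, then a weighted Cauchy--Schwarz over $l$), whereas the paper does a single Cauchy--Schwarz over the joint index $(l,m)$; both produce exactly the same constant.
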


\begin{proof}
We start writing $u = \sum_{l=0}^\infty \sum_{m=-l}^l a_{lm}Y_{lm}$. Since this series converges uniformly to $u$, we have
\begin{align}\label{laplacefourier}
\Vert u \Vert_{C^0} &\leq \sum_{l=0}^\infty \sum_{m=-l}^l \vert a_{lm} \vert \Vert Y_{lm}\Vert_{C^0} \nonumber \\ & = \sum_{l=0}^\infty \sum_{m=-l}^l \vert a_{lm} \vert \sqrt{\frac{2l+1}{4\pi}}
\end{align}
Now note that
$$\Vert u \Vert_{H^2}^2 = \sum_{l=0}^\infty \sum_{m=-l}^l  a_{lm}^2 \Vert Y_{lm}\Vert_{H^2}^2,$$
and it is simple to check that $\Vert Y_{lm} \Vert_{H^2}^2 = 1 + l(l+1) + l^2(l+1)^2$. Then
$$\Vert u \Vert_{H^2}^2 = \sum_{l=0}^\infty \sum_{m=-l}^l a_{lm}^2 \left(1+l(l+1)+l^2(l+1)^2\right)$$
and returning to \eqref{laplacefourier}, we can use Cauchy-Schwarz inequality to obtain
\begin{align}\label{ineq:c0norm}
\Vert u \Vert_{C^0} &\leq \sum_{l=0}^\infty  \sum_{m=-l}^l \vert a_{lm} \vert \frac{\sqrt{1+l(l+1)+l^2(l+1)^2}}{\sqrt{1+l(l+1)+l^2(l+1)^2}} \sqrt{\frac{2l+1}{4\pi}} \nonumber \\ &\leq \left(\sum_{l=0}^\infty \sum_{m=-l}^l a^2_{lm}(1+l(l+1)+l^2(l+1)^2)\right)^{1/2} \left(\sum_{l=0}^\infty \sum_{m=-l}^l \frac{2l+1}{4\pi}\frac{1}{(1+l(l+1)+l^2(l+1)^2)} \right)^{1/2} \nonumber \\ & = \frac{\Vert u \Vert_{H^2}}{\sqrt{4\pi}} \left(\sum_{l=0}^\infty \frac{(2l+1)^2}{1+l(l+1)+l^2(l+1)^2}\right)^{1/2}.
\end{align}
To this end, we shall estimate $\left(\sum_{l=0}^\infty b_l\right)^{1/2}$, where $b_l = \frac{(2l+1)^2}{1+l(l+1)+l^2(l+1)^2}$. Note that the series $\sum_l b_l$ converges since $b_l < \left(\frac{2l+1}{l(l+1)}\right)^2$ for $l\geq 1$ and
$$\left(\frac{2l+1}{l(l+1)}\right)^2 =\left( \frac{1}{l} + \frac{1}{l+1}\right)^2 = \frac{1}{l^2} + \frac{2}{l(l+1)}+ \frac{1}{(l+1)^2}.$$
Thus,
\begin{align*}
\sum_{l=0}^\infty b_l &= 1 + \sum_{l=1}^\infty b_l \\ &< 1 + \sum_{l=1}^\infty \frac{1}{l^2} + 2\sum_{l=1}^\infty \frac{1}{l(l+1)} + \sum_{l=1}^\infty \frac{1}{(l+1)^2} \\ &= 1 + \frac{\pi^2}{6} + 2 \sum_{l=1}^\infty \left(\frac{1}{l} - \frac{1}{l+1}\right) + \sum_{k=2}^\infty \frac{1}{k^2} \\ &= 1 + \frac{\pi^2}{6} + 2 + \left(\frac{\pi^2}{6}-1\right) \\ &= \frac{\pi^2}{3} +2.
\end{align*}
From this and \eqref{ineq:c0norm}, we get
$$\Vert u \Vert_{C^0} \leq \frac{\Vert u \Vert_{H^2}}{\sqrt{4\pi}} \left(\sum_{l=0}^\infty \frac{(2l+1)^2}{1+l(l+1)+l^2(l+1)^2}\right)^{1/2} < \frac{1}{2}\sqrt{\frac{1}{\pi}\left(\frac{\pi^2}{3}+2\right)}\Vert u \Vert_{H^2}.$$
\end{proof}

Finally, using integration by parts and the Poincaré inequality, we estimate $C_P$.

\begin{lemma}\label{lemma:cp}
Let $u_0 \in H^2(S^2)$ be a function with zero average. Then
$$\Vert u_0 \Vert_{H^2}^2 \leq \frac{7}{4} \Vert \Delta u_0 \Vert_{L^2}^2.$$
\end{lemma}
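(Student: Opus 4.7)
The plan is to split $\Vert u_0 \Vert_{H^2}^2 = \Vert u_0 \Vert_{L^2}^2 + \Vert \nabla u_0 \Vert_{L^2}^2 + \Vert \Delta u_0 \Vert_{L^2}^2$ and to bound each of the first two terms by a multiple of $\Vert \Delta u_0 \Vert_{L^2}^2$, where the crucial constants are $1/4$ and $1/2$ respectively, which sum with the trailing $1$ to give exactly $7/4$.

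First, I would invoke the Poincaré inequality on $S^2$ for zero-mean functions. Since $u_0$ has zero average, its spherical-harmonic expansion contains no $l=0$ term, and the smallest eigenvalue of $-\Delta$ on the orthogonal complement of the constants is $\lambda_1 = l(l+1)|_{l=1} = 2$. Hence
$$\Vert u_0 \Vert_{L^2}^2 \leq \tfrac{1}{2}\Vert \nabla u_0 \Vert_{L^2}^2.$$

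Next, I would use integration by parts (valid for $u_0 \in H^2(S^2)$ on a closed manifold, with no boundary terms) together with Cauchy--Schwarz:
$$\Vert \nabla u_0 \Vert_{L^2}^2 = -\int_{S^2} u_0 \, \Delta u_0 \, dA_{g_0} \leq \Vert u_0 \Vert_{L^2} \Vert \Delta u_0 \Vert_{L^2}.$$
Plugging the Poincaré bound $\Vert u_0 \Vert_{L^2} \leq \frac{1}{\sqrt{2}}\Vert \nabla u_0 \Vert_{L^2}$ into the right-hand side and dividing by $\Vert \nabla u_0 \Vert_{L^2}$ yields
$$\Vert \nabla u_0 \Vert_{L^2}^2 \leq \tfrac{1}{2}\Vert \Delta u_0 \Vert_{L^2}^2,$$
and, re-applying Poincaré once more,
$$\Vert u_0 \Vert_{L^2}^2 \leq \tfrac{1}{4}\Vert \Delta u_0 \Vert_{L^2}^2.$$

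Summing the three contributions gives
$$\Vert u_0 \Vert_{H^2}^2 \leq \left(\tfrac{1}{4}+\tfrac{1}{2}+1\right)\Vert \Delta u_0 \Vert_{L^2}^2 = \tfrac{7}{4}\Vert \Delta u_0 \Vert_{L^2}^2,$$
as desired. There is essentially no obstacle here; the only subtle point worth noting is the sharpness of the constant, which is consistent with the fact that the ratio $(1+l(l+1)+l^2(l+1)^2)/(l^2(l+1)^2)$ appearing in a direct spectral decomposition is maximized at $l=1$ with value exactly $7/4$. One could alternatively present the proof entirely through the Laplace--Fourier expansion already introduced in Lemma \ref{lemma:cs}, but the integration-by-parts route above is shorter and avoids any further spherical-harmonics bookkeeping.
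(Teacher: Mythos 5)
Your proof is correct and follows essentially the same route as the paper: the Poincaré inequality with $\lambda_1 = 2$, integration by parts plus Cauchy--Schwarz to get $\Vert\nabla u_0\Vert_{L^2}^2 \leq \tfrac12\Vert\Delta u_0\Vert_{L^2}^2$, then summing the three terms. Your closing remark on sharpness at $l=1$ (and the observation that $\tfrac74$ is the maximum of $(1+l(l+1)+l^2(l+1)^2)/(l^2(l+1)^2)$) is a nice check that the paper does not include.
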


\begin{proof}
Since $u_0$ has zero average, the Poincaré inequality yields
\begin{equation}\label{ineq:poincare}
\Vert u_0 \Vert_{L^2}^2 \leq \frac{1}{\lambda_1(g_0)} \Vert \nabla u_0 \Vert_{L^2}^2 = \frac{1}{2}\Vert \nabla u_0 \Vert_{L^2}^2,
\end{equation}
where $\lambda_1(g_0)=1/2$ is the first nonzero Laplacian eigenvalue for the round metric $g_0$ on $S^2$. Moreover, integration by parts together with Cauchy-Schwarz inequality give us
\begin{equation}
\int_{S^2}  \Vert \nabla u_0 \Vert^2 \ dA_{g_0} = -\int_{S^2} u_0\Delta u_0 \ dA_{g_0} \leq \Vert u_0 \Vert_{L^2} \Vert \Delta u_0 \Vert_{L^2}.
\end{equation}
Combining this with \eqref{ineq:poincare}, we obtain
$$\Vert \nabla u_0 \Vert_{L^2}^2 \leq \frac{1}{\sqrt{2}}\Vert \nabla u_0 \Vert_{L^2} \Vert \Delta u_0 \Vert_{L^2},$$
and then
\begin{equation}\label{ineq:byparts}
\Vert \nabla u_0 \Vert_{L^2} \leq \frac{1}{\sqrt{2}} \Vert \Delta u_0 \Vert_{L^2}.
\end{equation}
At last, from \eqref{ineq:poincare} and \eqref{ineq:byparts}, we obtain
\begin{align*}
\Vert u_0 \Vert_{H^2}^2 &= \Vert u_0 \Vert_{L^2}^2 + \Vert \nabla u_0 \Vert_{L^2}^2 + \Vert \Delta u_0 \Vert_{L^2}^2 \\  &= \frac{3}{2} \Vert \nabla u_0 \Vert_{L^2}^2 + \Vert \Delta u_0 \Vert_{L^2}^2 \\ &\leq \frac{3}{4} \Vert \Delta u_0 \Vert_{L^2}^2 + \Vert \Delta u_0 \Vert_{L^2}^2 = \frac{7}{4} \Vert \Delta u_0 \Vert_{L^2}^2.
\end{align*}
\end{proof}

\addcontentsline{toc}{section}{References}
\bibliographystyle{alpha}
\bibliography{biblio}
\end{document}